\documentclass[10pt,twocolumn,twoside]{IEEEtran} 

\usepackage{amsmath,amssymb,mathtools,amsthm}
\usepackage{graphicx,color}
\usepackage{cite}

\newtheorem{theorem}{Theorem}[section]
\newtheorem{proposition}[theorem]{Proposition}
\newtheorem{lemma}[theorem]{Lemma}
\newtheorem{corollary}[theorem]{Corollary}
\newtheorem{remark}[theorem]{Remark}
\newtheorem{example}[theorem]{Example}
\newtheorem{definition}[theorem]{Definition}
\newtheorem{problem}{Problem}

\newcommand{\longthmtitle}[1]{\mbox{}\textit{(#1):}}

\newcommand{\real}{\ensuremath{\mathbb{R}}}
\newcommand{\realpos}{\ensuremath{\mathbb{R}_{>0}}}

\newcommand{\intpos}{{\mathbb{N}}}
\newcommand{\intnonneg}{{\mathbb{Z}}_{\ge 0}}

\DeclareMathOperator*{\argmin}{argmin}
\DeclareMathOperator*{\minimize}{minimize}

\let\emptyset\varnothing

\newcommand{\setdef}[2]{\{#1 \; | \; #2\}}
\newcommand{\setdefb}[2]{\big\{#1 \; | \; #2\big\}}
\newcommand{\setdefB}[2]{\Big\{#1 \; | \; #2\Big\}}

\newcommand{\Fc}{\mathcal{F}}
\newcommand{\Gc}{\mathcal{G}}
\newcommand{\Hc}{\mathcal{H}}
\newcommand{\Ic}{\mathcal{I}}

\newcommand{\Kc}{\mathcal{K}}
\newcommand{\Lc}{\mathcal{L}}
\newcommand{\Mc}{\mathcal{M}}
\newcommand{\Nc}{\mathcal{N}}
\newcommand{\Oc}{\mathcal{O}}
\newcommand{\Pc}{\mathcal{P}}

\newcommand{\Sc}{\mathcal{S}}

\newcommand{\Vc}{\mathcal{V}}
\newcommand{\Wc}{\mathcal{W}}
\newcommand{\Xc}{\mathcal{X}}

\newcommand{\Lap}{\mathrm{Lap}}

\newcommand{\pr}{\mathbb{P}}
\newcommand{\E}{\mathbb{E}}

\newcommand{\until}[1]{\{1,\dots,#1\}}
\newcommand{\proj}{\text{proj}}

\newcommand{\erf}{\text{erf}}

% \renewcommand{\myclearpage}{}

%% Enumerate environment

\newcommand{\oprocendsymbol}{\hbox{$\square$}}
\newcommand{\oprocend}{\relax\ifmmode\else\unskip\hfill\fi\oprocendsymbol}
\def\eqoprocend{\tag*{$\square$}}

\graphicspath{{figures/}}

\pdfminorversion=4

\title{Differentially Private Distributed Convex Optimization via
  Functional Perturbation\thanks{A preliminary version of this paper
    has been submitted to the 2016 American Control Conference,
    Boston, MA}}

\author{Erfan Nozari \quad Pavankumar Tallapragada \quad Jorge
  Cort\'es%
  \thanks{The authors are with the Department of Mechanical and
    Aerospace Engineering, University of California, San Diego,
    \{enozari,ptallapragada,cortes\}@ucsd.edu}}

\begin{document}

\maketitle
\thispagestyle{empty}
\pagestyle{empty}

\begin{abstract}
  We study a class of distributed convex constrained optimization
  problems where a group of agents aim to minimize the sum of
  individual objective functions while each desires that
    any information about its objective function is kept private.  We
  prove the impossibility of achieving differential privacy using
  strategies based on perturbing the inter-agent messages with noise
  when the underlying noise-free dynamics are
  asymptotically stable.  This justifies our algorithmic solution
  based on the perturbation of individual functions with Laplace
  noise. To this end, we establish a general framework for
  differentially private handling of functional data.  We further
  design post-processing steps that ensure the perturbed functions
  regain the smoothness and convexity properties of the original
  functions while preserving the differentially private guarantees of
  the functional perturbation step. This methodology allows us to use
  any distributed coordination algorithm to solve the optimization
  problem on the noisy functions.  Finally, we explicitly bound the
  magnitude of the expected distance between the perturbed and true
  optimizers which leads to an upper bound on the
    privacy-accuracy trade-off curve. Simulations illustrate our
  results.
\end{abstract}

\begin{IEEEkeywords}
  Networks of Autonomous Agents, Optimization, Distributed
  Algorithms/Control, Differential Privacy, Cyber-Physical Systems
\end{IEEEkeywords}

\section{Introduction}\label{sec:intro}

Privacy preservation is an increasingly critical issue that plays a
key role in preventing catastrophic failures in physical
infrastructure as well as easing the social adoption of new
technology.  Power networks, manufacturing systems, and smart
transportation are just but a few examples of cyberphysical
applications in need of privacy-aware design of control and
coordination strategies.  In these scenarios, the problem of
optimizing the operation of a group of networked resources is a common
and important task, where the individual objective functions
associated to the entities, the estimates of the optimizer, or even
the constraints on the optimization might reveal sensitive
information.  Our work here is motivated by the goal of synthesizing
distributed coordination algorithms that solve networked optimization
problems with privacy guarantees \emph{and} high accuracy.

\subsubsection*{Literature review}
Our work builds upon the existing literature of distributed convex
optimization and differential privacy. In the area of networked
systems, an increasing body of research,
e.g.,~\cite{DPB-JNT:97,AN-AO-PAP:10,BJ-MR-MJ:09,MZ-SM:12,BG-JC:14-tac,JCD-AA-MJW:12}
and references therein, designs and analyzes algorithms for
distributed convex optimization both in discrete and continuous time
as well as in deterministic and stochastic scenarios. While these
works consider an ambitious suite of topics related to convergence and
performance under various constraints imposed by real-world
applications, privacy is an aspect generally absent in their
treatment.  The concept of differential
privacy~\cite{CD-FM-KN-AS:06,CD:06} was originally proposed for
databases of individual records subject to public queries and has been
extended to several areas thereafter. The recent work~\cite{CD-AR:14}
provides a comprehensive recent account of this area.  In machine
learning, the problem of differentially private optimization has
received attention, see
e.g.~\cite{KC-CM-ADS:11,DK-AS-AT:12,JZ-ZZ-XX-YY-MW:12,RH-AR-LW:13,AR-SA:12,SS-KC-ADS:13,KC-DH-SS:14},
as an intermediate, usually centralized, step for solving other
learning or statistical tasks. The common paradigm is having the
sensitive information correspond to the entries of a finite database
of records or training data that usually constitute the parameters of
an additive objective function. Threat models are varied, including
releasing to the adversary the whole sequence of internal states of
the optimization process or only the algorithm's final
output. In~\cite{KC-CM-ADS:11}, the authors design a differentially
private classifier by perturbing the objective function with a linear
finite-dimensional function (hyper-plane).  It is shown
in~\cite{DK-AS-AT:12} that this method also works in the presence of
constraints and non-differentiable regularizers.  Although this is
sufficient to preserve the privacy of the underlying
finite-dimensional parameter set (learning samples), it cannot keep
the whole objective functions private.  The authors
of~\cite{JZ-ZZ-XX-YY-MW:12} design a sensitivity-based differentially
private algorithm for regression analysis which, instead of perturbing
the optimal weight vector, perturbs the regression cost function by
injecting noise into the coefficients of the quadratic truncation of
its Taylor expansion.  This truncation limits the functional space to
the (finite-dimensional) space of quadratic
functions. In~\cite{RH-AR-LW:13}, the authors propose the addition of
a sample path of a Gaussian random process to the objective function,
but do not explore the generalization to arbitrary dimensions or
ensure the smoothness and convexity of the resulting function.  In
general, the proposed algorithms are not distributed and neither
designed for nor capable of preserving the privacy of
infinite-dimensional objective functions.  Furthermore, the work in
this area does not rigorously study the effect of added noise on the
global optimizer or on the smoothness and convexity properties of the
objective functions. In addition to addressing these issues, the
present treatment is applicable to scenarios where the sensitive
information consists of elements of any separable Hilbert space,
including (objective) functions coming from the (infinite-dimensional)
$L_2$ space.

Of more relevance to our paper are recent
works~\cite{ZH-SM-NV:15,SH-UT-GJP:16,MTH-ME:15} that are focused on
differentially private distributed optimization problems for
multi-agent systems.  These papers consider as private information,
respectively, the objective functions, the optimization constraints,
and the agents' states. The underlying commonality is the algorithm
design approach based on the idea of message perturbation. This idea
consists of adopting a standard distributed optimization algorithm and
modifying it by having agents perturb the messages to their neighbors
or a central coordinator with Laplace or Gaussian noise.  This
approach has the advantage of working with the original objective
functions and thus is easy to implement. However, for fixed design
parameters, the algorithm's output does not correspond to the true
optimizer in the absence of noise, suggesting the presence of a
steady-state accuracy error.  This problem is addressed
in~\cite{SH-UT-GJP:16} by terminating the algorithm after a finite
number of steps and optimizing this number offline as a function of
the desired level of privacy.  Nevertheless, for any fixed level of
privacy, there exists an amount of bias in the algorithm's output
which is not due to the added noise but to the lack of asymptotic
stability of the underlying noiseless dynamics.  To address this
issue, our approach explores the use of functional perturbation to
achieve differential privacy. The concept of functional differential
privacy combines the benefits of metrics and adjacency relations. The
authors of~\cite{KC-MEA-NEB-CP:13} also employ metrics instead of
binary adjacency relations in the context of differential
privacy. This approach has the advantage that the difference between
the probabilities of events corresponding to any pair of data sets is
bounded by a function of the distance between the data sets,
eliminating the need for the computation of conservative sensitivity
bounds.

\subsubsection*{Statement of contributions}
We consider a group of agents that seek to minimize the sum of their
individual objective functions over a communication network in a
differentially private manner.  Our first contribution is to show that
coordination algorithms which rely on perturbing the agents' messages
with noise cannot satisfy the requirements of differential privacy if
the underlying noiseless dynamics are locally asymptotically stable.
The presence of noise necessary to ensure differential privacy is
known to affect the algorithm accuracy in solving the distributed
convex optimization problem. However, this result explains why
message-perturbing strategies incur additional inaccuracies that are
present even if no noise is added.  Our second contribution is
motivated by the goal of guaranteeing that the algorithm accuracy is
only affected by the presence of noise. We propose a general framework
for functional differential privacy over Hilbert spaces and introduce
a novel definition of adjacency using adjacency spaces. The latter
notion is quite flexible and includes, as a special case, the
conventional bounded-difference notion of adjacency. We carefully
specify these adjacency spaces within the $L_2$ space such that the
requirement of differential privacy can be satisfied with bounded
perturbations.  Our third contribution builds on these results on
functional perturbation to design a class of distributed,
differentially private coordination algorithms. We let each agent
perturb its own objective function based on its desired level of
privacy, and then the group uses any provably correct distributed
coordination algorithm to optimize the sum of the individual perturbed
functions.  Two challenges arise to successfully apply this strategy:
the fact that the perturbed functions might lose the smoothness and
convexity properties of the original functions and the need to
characterize the effect of the added noise on the minimizer of the
resulting problem. We address the first challenge using a cascade of
smoothening and projection steps that maintain the differential
privacy of the functional perturbation step. We address the second
challenge by explicitly bounding the absolute expected deviation from
the original optimizer using a novel Lipschitz characterization of the
$\argmin$ map. By construction, the resulting coordination algorithms
satisfy the requirement of recovering perfect accuracy in the absence
of noise. Various simulations illustrate our results.

\subsubsection*{Organization}

We introduce our notation and basic preliminaries in
Section~\ref{sec:prelims} and formulate the private distributed
optimization problem in
Section~\ref{sec:prob-state}. Section~\ref{sec:rationale} presents the
rationale for our design strategy and
Section~\ref{sec:func-diff-privacy} describes a general framework for
functional differential privacy.  We formulate our solution to the
private distributed optimization problem in
Section~\ref{sec:diff-private-dist-opt}.  We present simulations in
Section~\ref{sec:sims} and collect our conclusions and ideas for
future work in
Section~\ref{sec:conclusion}. Appendix~\ref{sec:argmin-lip} gathers
our results on the Lipschitzness of the $\argmin$ map under suitable
assumptions.

\section{Preliminaries}\label{sec:prelims}

In this section, we introduce our notational conventions and some
fundamental facts about Hilbert spaces and robust stability of
discrete-time systems.

\subsection{Notation}
We use $\real$, $\realpos$, $\intnonneg$, and $\intpos$ to denote the
set of reals, positive reals, nonnegative integers, and positive
integers, respectively.  The space of scalar- and $n$-vector-valued
infinite sequences are denoted by $\real^\intpos$ and
$(\real^n)^\intpos$, respectively. Given $K \in \intpos$ and an
element $\boldsymbol{\eta} = \{\eta(k)\}_{k = 0}^\infty$ of
$\real^\intpos$ or $(\real^n)^\intpos$, we use the shorthand notation
$\boldsymbol{\eta}_K = \{\eta(k)\}_{k = 0}^K$. If the index of
$\boldsymbol{\eta}$ starts at $k=1$, with a slight abuse of notation
we also denote $\{\eta(k)\}_{k = 1}^K$ by $\boldsymbol{\eta}_K$.  We
denote by $\ell_2 \subset \real^\intpos$ the space of square-summable
infinite sequences.  We use $| \cdot |_p$ and $\| \cdot \|_p$ for the
$p$-norm in finite and infinite-dimensional normed vector spaces,
respectively (we drop the index $p$ for $p = 2$). We let $B(c, r)$
denote the closed ball with center $c$ and radius $r$ in Euclidean
space.
For $D \subseteq \real^d$, $D^o$ denotes its interior and $L_2(D)$ and
$C^2(D)$ denote the set of square-integrable measurable functions and
the set of twice continuously differentiable functions over $D$,
respectively. Throughout the paper, $m(\cdot)$ denotes the Lebesgue
measure. If $\{E_k\}_{k = 1}^\infty$ is a sequence of subsets of
$\Omega$ such that $E_k \subseteq E_{k + 1}$ and $E = \bigcup_k E_k$,
then we write $E_k \uparrow E$ as $k \to \infty$. We say $E_k
\downarrow E$ as $k \to \infty$ if $E_k^c \uparrow E^c$ as $k \to
\infty$, where $E^c = \Omega \setminus E$ is the complement of
$E$. Given any closed and convex subset $\Sc \subseteq \Hc$ of a
Hilbert space, we denote by $\proj_\Sc$ the orthogonal projection
onto~$\Sc$.

We denote by $\Kc$ the set of strictly increasing continuous functions
$\alpha: [0, \infty) \to [0, \infty)$ such that $\alpha(0) = 0$. A
function $\alpha$ belongs to $\Kc_\infty$ if $\alpha \in \Kc$ and
$\lim_{r \to \infty} \alpha(r) = \infty$. We denote by $\Kc \Lc$ the
set of functions $\beta:[0, \infty) \times [0, \infty) \to [0,
\infty)$ such that, for each $s \in [0, \infty)$, $r \mapsto
\beta(r,s)$ is nondecreasing and continuous and $\beta(0, s) = 0$ and,
for each $r \in [0, \infty)$, $s \mapsto \beta(r,s)$ is monotonically
decreasing with $\beta(r, s) \to 0$ as $s \to \infty$.  A map $M: X
\to Y$ between two normed vector spaces is $\Kc$-Lipschitz if there
exists $\kappa \in \Kc_\infty$ such that $\|M(x_1) - M(x_2)\|_Y \le
\kappa(\|x_1 - x_2\|_X)$ for all $x_1, x_2 \in X$.

The (zero-mean) Laplace distribution with scale $b \in \realpos$ is a
continuous distribution with probability density function
\begin{align*}
  \Lc(x; b) = \frac{1}{2b} e^{-\frac{|x|}{b}}.
\end{align*}
It is clear that $\frac{\Lc(x; b)}{\Lc(y; b)} \le e^\frac{|x -
  y|}{b}$. We use $\eta \sim \Lap(b)$ to denote a random variable
$\eta$ with Laplace distribution. It is easy to see that if $\eta \sim
\Lap(b)$, $|\eta|$ has an exponential distribution with rate $\lambda
= \frac{1}{b}$. Similarly, we use the notation $\eta \sim \Nc(\mu,
\sigma^2)$ when $\eta$ is normally distributed with mean $\mu$ and
variance $\sigma^2$. The error function $\erf:\real \to \real$ is
defined as
\begin{align*}
  \erf(x) \triangleq \frac{1}{\sqrt \pi} \int_{-x}^x e^{-t^2} d t \ge
  1 - e^{-x^2}.
\end{align*}
Therefore, $\pr\{|\eta| \le r\} = \erf(r / \sqrt 2 \sigma)$ if $\eta
\sim \Nc(0, \sigma^2)$. Given any random variable $\eta$ and any
convex function $\phi$, Jensen's inequality states that
$\E[\phi(\eta)] \ge \phi(\E[\eta])$. The opposite inequality holds if
$\phi$ is concave.

\subsection{Hilbert Spaces and Orthonormal Bases}

We review some basic facts on Hilbert spaces and refer the reader
to~\cite{EK:89} for a comprehensive treatment.  A Hilbert space~$\Hc$
is a complete inner-product space. A set $\{e_k\}_{k \in I} \subset
\Hc$ is an orthonormal system if $\langle e_k, e_j \rangle = 0$ for $k
\neq j$ and $\langle e_k, e_k \rangle = \|e_k\|^2 = 1$ for all $k \in
I$. If, in addition, the set of linear combinations of 
$\{e_k\}_{k \in I}$ is dense in~$\Hc$, then $\{e_k\}_{k \in I}$ is an
orthonormal basis.  Here, $I$ might be uncountable: however, if $\Hc$
is separable (i.e., it has a countable dense subset), then any
orthonormal basis is countable. In this case, we have
\begin{align*}
  h = \sum_{k = 1}^\infty \langle h, e_k \rangle e_k,
\end{align*}
for any $h \in \Hc$. We define the coefficient sequence $\boldsymbol{\theta} \in
\real^\intpos$ by $\theta_k = \langle h, e_k \rangle$ for $k \in
\intpos$. Then, $\boldsymbol{\theta} \in \ell_2$ and, by Parseval's
identity, $ \|h\| = \|\boldsymbol{\theta}\|$.  For ease of notation,
we define $\Phi:\ell_2 \to \Hc$ to be the linear bijection that maps
the coefficient sequence $\boldsymbol{\theta}$ to $h$.  For an
arbitrary $D \subseteq \real^d$, $L_p(D)$ is a Hilbert space if and
only if $p = 2$, and the inner product is the integral of the product
of functions. Moreover, $L_2(D)$ is separable. In the remainder of the
paper, we assume $\{e_k\}_{k = 1}^\infty$ is an orthonormal basis for
$L_2(D)$ and $\Phi:\ell_2 \to L_2(D)$ is the corresponding linear
bijection between coefficient sequences and functions.

\subsection{Robust Stability of Discrete-Time
  Systems}\label{subsection:ISS}

We briefly present some definitions and results on robust stability of
discrete-time systems following~\cite{CC-ART:05}.  Given the vector
field $f: \real^n \times \real^m \to \real^n$, consider the 
system
\begin{align}\label{eq:discrete-dynamics}
  x(k + 1) = f(x(k), \eta(k)),
\end{align}
with state $\mathbf{x}: \intnonneg \to \real^n$ and input
$\boldsymbol{\eta}: \intnonneg \to \real^m$. Given an equilibrium
point $x^* \in \real^n$ of the unforced system, we say
that~\eqref{eq:discrete-dynamics} is
\begin{enumerate}
\item \emph{0-input locally asymptotically stable (0-LAS) relative to
    $x^*$} if, by setting $\boldsymbol{\eta} = 0$, there exists $\rho > 0$ and
  $\gamma \in \Kc \Lc$ such that, for every initial condition $x(0)
  \in B(x^*, \rho)$, we have for all $k \in \intnonneg$,
    \begin{align*}
      |x(k) - x^*| \le \gamma(|x(0) - x^*|, k).
    \end{align*}
  \item \emph{locally input-to-state stable (LISS) relative to $x^*$}
    if there exist $\rho > 0$, $\gamma \in \Kc \Lc$, and $\kappa \in \Kc$
    such that, for every initial condition $x(0) \in B(x^*, \rho)$ and
    every input satisfying $\|\boldsymbol{\eta}\|_\infty \le \rho$, we
    have
  \begin{align}\label{eq:LISS}
    \hspace{-0.14in} |x(k) - x^*| \le \max \{\gamma(|x(0) - x^*|, k),
    \kappa(|\boldsymbol{\eta}_{k - 1}|_\infty)\},
  \end{align}
  for all $k \in \intpos$. In this case, we refer to $\rho$ as the
  \emph{robust stability radius}
  of~\eqref{eq:discrete-dynamics} relative to $x^*$.
 \end{enumerate}

By definition, if the system~\eqref{eq:discrete-dynamics} is LISS,
then it is also 0-LAS. The converse is also true, cf.~\cite[Theorem
1]{CC-ART:05}. The following result is a local version of~\cite[Lemma
3.8]{ZPJ-YW:01} and states an important asymptotic behavior of LISS
systems.

\begin{proposition}\longthmtitle{Asymptotic gain of LISS
    systems}\label{prop:asym-gain}
  Assume system~\eqref{eq:discrete-dynamics} is LISS relative to~$x^*$
  with associated robust stability radius~$\rho$. If
  $x(0) \in B(x^*, \rho)$ and $\|\boldsymbol{\eta}\|_\infty \le
  \min\{\kappa^{-1}(\rho), \rho\}$ (where $\kappa^{-1}(\rho) = \infty$
  if $\rho$ is not in the range of $\kappa$), then
  \begin{align*}
    \limsup_{k \to \infty} |x(k) - x^*| \le \kappa(\limsup_{k \to
      \infty} |\eta(k)|).
  \end{align*}
  In particular, $x(k) \to x^*$ if $\eta(k) \to 0$ as $k \to \infty$.
\end{proposition}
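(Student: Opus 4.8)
The plan is to exploit the time-invariance of~\eqref{eq:discrete-dynamics} through a \emph{restart} argument. The LISS bound~\eqref{eq:LISS} splits the deviation $|x(k)-x^*|$ into a transient term $\gamma(|x(0)-x^*|,k)$ that decays in $k$ and an input-driven term $\kappa(|\boldsymbol{\eta}_{k-1}|_\infty)$. The idea is that re-initializing the system at a sufficiently late time $N$ makes the transient contribution negligible, while the input term over the remaining horizon sees only the asymptotic size of $\boldsymbol{\eta}$.

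First I would set $\bar{\eta} \triangleq \limsup_{k \to \infty} |\eta(k)|$, noting $\bar{\eta} \le \|\boldsymbol{\eta}\|_\infty \le \min\{\kappa^{-1}(\rho), \rho\}$, and fix $\epsilon > 0$. By definition of $\limsup$, there is $N \in \intpos$ with $|\eta(k)| \le \bar{\eta} + \epsilon$ for all $k \ge N$. The crucial and most delicate step is to guarantee that the restarted initial condition lies in the robust stability ball, i.e. $x(N) \in B(x^*, \rho)$, so that~\eqref{eq:LISS} may be invoked again from time $N$. Evaluating~\eqref{eq:LISS} at $k = N$ gives
\begin{align*}
  |x(N) - x^*| \le \max\{\gamma(|x(0) - x^*|, N), \; \kappa(|\boldsymbol{\eta}_{N-1}|_\infty)\},
\end{align*}
where the second term is bounded by $\kappa(\|\boldsymbol{\eta}\|_\infty) \le \kappa(\kappa^{-1}(\rho)) = \rho$ thanks to $\|\boldsymbol{\eta}\|_\infty \le \kappa^{-1}(\rho)$, and the first term tends to $0$ as $N \to \infty$ because $\gamma \in \Kc\Lc$. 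Enlarging $N$ if necessary therefore secures $x(N) \in B(x^*, \rho)$. I expect this to be the main obstacle, since it is exactly where both parts of the hypothesis $\|\boldsymbol{\eta}\|_\infty \le \min\{\kappa^{-1}(\rho), \rho\}$ enter (the $\rho$ bound additionally ensures that the shifted input is admissible for~\eqref{eq:LISS}).

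With $x(N) \in B(x^*, \rho)$ established, I would apply~\eqref{eq:LISS} to the shifted trajectory $j \mapsto x(N + j)$ driven by $j \mapsto \eta(N + j)$, whose initial condition is $x(N)$ and whose input history over $[0, j-1]$ consists of the samples $\eta(N), \dots, \eta(N + j - 1)$, all bounded by $\bar{\eta} + \epsilon$. Using $|x(N) - x^*| \le \rho$ and monotonicity of $\gamma$ and $\kappa$, this yields for every $j$
\begin{align*}
  |x(N + j) - x^*| \le \max\{\gamma(\rho, j), \; \kappa(\bar{\eta} + \epsilon)\}.
\end{align*}

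Taking $\limsup_{j \to \infty}$ eliminates the transient term, since $\gamma(\rho, \cdot) \to 0$, so that $\limsup_{k \to \infty} |x(k) - x^*| \le \kappa(\bar{\eta} + \epsilon)$. As $\epsilon > 0$ is arbitrary and $\kappa$ is continuous, letting $\epsilon \downarrow 0$ gives $\limsup_{k \to \infty} |x(k) - x^*| \le \kappa(\bar{\eta}) = \kappa(\limsup_{k \to \infty} |\eta(k)|)$, which is the claim. The final assertion follows at once: if $\eta(k) \to 0$ then $\bar{\eta} = 0$ and $\kappa(0) = 0$, forcing $|x(k) - x^*| \to 0$.
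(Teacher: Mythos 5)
Your proof is correct. Both you and the paper exploit time-invariance to restart the LISS estimate after the transient has decayed, but the executions differ in a meaningful way. The paper first shows $x(k)\in B(x^*,\rho)$ for all large $k$ (as you do), then splits into two cases: if some tail of the input is identically zero it argues convergence directly from a late restart, and otherwise it constructs an increasing sequence of restart times $K_j$ chosen so that $\gamma(\rho,k-K_{j-1})\le\kappa(\|\boldsymbol{\eta}_{[K_{j-1}]}\|_\infty)$ for all $k\ge K_j$, obtaining $\limsup_{k\to\infty}|x(k)-x^*|\le\kappa(\|\boldsymbol{\eta}_{[K_j]}\|_\infty)$ for every $j$ and passing to the limit in $j$. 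You instead perform a single restart at a time $N$ chosen from the definition of $\limsup_{k\to\infty}|\eta(k)|$ plus an $\epsilon$ margin, and then remove the $\epsilon$ using the continuity of $\kappa$ (which is part of the paper's definition of class $\Kc$, so this is legitimate). Your route buys a shorter argument with no case distinction: the paper's separate treatment of the vanishing-tail case exists precisely because its choice of $K_j$ requires $\kappa(\|\boldsymbol{\eta}_{[K_{j-1}]}\|_\infty)>0$ for $K_j$ to be well defined, an issue your $\epsilon$-margin sidesteps entirely. The modest price is the explicit appeal to continuity of $\kappa$, which the paper's version never needs. The one delicate point---securing $x(N)\in B(x^*,\rho)$ and the admissibility of the shifted input so that~\eqref{eq:LISS} can be reapplied---is exactly where you placed the emphasis, and your handling of it (including the convention $\kappa^{-1}(\rho)=\infty$ when $\rho$ lies outside the range of $\kappa$) matches the role these hypotheses play in the paper's argument.
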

\begin{proof}
  From~\eqref{eq:LISS}, we have
  \begin{align}\label{eq:auxx}
    |x(k) - x^*| &\le \max \{\gamma(|x(0) - x^*|, k),
    \kappa(\|\boldsymbol{\eta}\|_\infty)\} \notag
    \\
    &\le \max \{\gamma(\rho, k),
    \kappa(\|\boldsymbol{\eta}\|_\infty)\} ,
  \end{align}
  where we have used $x(0) \in B(x^*, \rho)$. Now, for each $k \in
  \intpos$, let $\boldsymbol{\eta}_{[k]} \in (\real^n)^\intpos$ be
  defined by $ \eta_{[k]}(\ell) = \eta(k + \ell)$ for all $\ell \in
  \intnonneg$.  If there exists $k_0$ such that
  $\|\boldsymbol{\eta}_{[k_0]}\|_\infty = 0$, then we need to show
  that $\lim_{k \to \infty} |x(k) - x^*| = 0$. Since $\gamma \in \Kc
  \Lc$, there exists $K \in \intnonneg$ such that $\gamma(\rho, k) \le
  \rho$ for all $k \ge K$, and since
  $\kappa(\|\boldsymbol{\eta}\|_\infty) \le \rho$ as well, it follows
  from~\eqref{eq:auxx} that $x(k) \in B(x^*, \rho)$ for all $k \ge
  K$. Let $\overline k = \max\{k_0, K\}$. Using~\eqref{eq:LISS}, we
  get
  \begin{align*}
    |x(k) - x^*| \le \gamma(|x(\overline k) - x^*|, k - \overline k),
    \quad \forall k > \overline k,
  \end{align*}
  and the result follows.  Assume then that no $k_0$ exists such that
  $\|\boldsymbol{\eta}_{[k_0]}\|_\infty = 0$.  Let $K_0 = 0$ and, for
  each $j \in \intpos$, let $K_j$ be such that $\gamma(\rho,k - K_{j -
    1}) \le \kappa(\|\boldsymbol{\eta}_{[K_{j-1}]}\|_\infty)$ for all
  $k \ge K_j$ (this sequence is well-defined because $\gamma \in
  \Kc\Lc$). Since $\kappa(\|\boldsymbol{\eta}_{[K_{j-1}]}\|_\infty)
  \le \kappa(\|\boldsymbol{\eta}\|_\infty) \le \rho$, \eqref{eq:LISS}
  holds if we set the ``initial'' state to $x(K_{j - 1})$ which
  implies that $|x(k) - x^*| \le
  \kappa(\|\boldsymbol{\eta}_{[K_{j-1}]}\|_\infty)$ for all $k \ge
  K_j$. Therefore,
  \begin{align*}
    \limsup_{k \to \infty} |x(k) - x^*| \le \kappa(
    \|\boldsymbol{\eta}_{[K_{j}]}\|_\infty), \quad \forall j \in
    \intnonneg.
  \end{align*}
  The result follows by taking limit of both sides as $j \to \infty$.
\end{proof}

\section{Problem Statement}\label{sec:prob-state}

Consider a group of $n$ agents whose communication topology is
described by a digraph $\Gc$. Each agent $i \in \until{n}$ has a local
objective function $f_i: D \to \real$, where $D \subset \real^d$ is
convex and compact and has nonempty interior. We assume that each
$f_i, i \in \until{n}$ is convex and twice continuously
differentiable, and use the shorthand notation $F = \{f_i\}_{i =
  1}^n$.  Consider the following convex optimization problem
\begin{align*}
  \minimize_{x \in D} \quad &f(x) \triangleq \sum_{i = 1}^n f_i(x)
  \\
  \notag \text{subject to } \ \, &G(x) \le 0,
  \\
  \notag \qquad \qquad &A x = b,
\end{align*}
where the component functions of $G:D \to \real^m$ are convex, $A \in
\real^{s \times d}$, and $b \in \real^s$. Denote by $X \subseteq D$
the feasibility set. The optimization problem can be
equivalently written as,
\begin{align}\label{eq:f-X}
  \minimize_{x \in X} \ f(x).
\end{align}
We assume that $X$ is a global piece of information known to all
agents.

The group objective is to solve the convex optimization
problem~\eqref{eq:f-X} in a distributed and private way. By
distributed, we mean that each agent can only interact with its
neighbors in~$\Gc$. For privacy, we consider the case where the
function $f_i$ (or some of its attributes) constitute the local and
sensitive information known to agent $i \in \until{n}$ that has to be
kept confidential. Each agent assumes (the worst-case where) the
adversary has access to all the ``external'' information (including
all the network communications and all other objective
functions). This setting is sometimes called local (differential)
privacy in the literature, see e.g.,~\cite{JCD-MIJ-MJW:13}.
In order to define privacy, we first introduce the notion
of adjacency. Given any normed vector space $(\Vc, \|\cdot\|_\Vc)$
with $\Vc \subseteq L_2(D)$, two sets of functions $F, F' \subset
L_2(D)$ are $\Vc$-adjacent if there exists $i_0 \in \until{n}$ such
that
\begin{align*}
  f_i = f_i', \; i \neq i_0 \quad \text{and} \quad
  f_{i_0} - f_{i_0}' \in \Vc.
\end{align*}
The set $\Vc$ is a design choice that we specify later
  in Section~\ref{subsec:func-perturb}. Moreover, this definition can
be readily extended to the case where $\Vc$ is any subset of another
normed vector space $\Wc \subseteq L_2(D)$. With this generalization,
the conventional bounded-difference notion of adjacency becomes a
special case of the definition above, where $\Vc$ is a closed ball
around the origin. We provide next a general definition of
differential privacy for a map.

\begin{definition}\longthmtitle{Differential
    Privacy}\label{def:func-diff-privacy}
  Let $(\Omega, \Sigma, \pr)$ be a probability space and consider a
  random map
  \begin{align*}
    \Mc: L_2(D)^n \times \Omega \to \Xc
  \end{align*}
  from the function space $L_2(D)^n$ to an arbitrary set $\Xc$. Given
  $\epsilon \in \realpos^n$, the map $\Mc$ is
  $\epsilon$-differentially private if, for any two $\Vc$-adjacent
  sets of functions $F$ and $F'$ that (at most) differ in
  their $i_0$'th element and any set $\Oc \subseteq \Xc$, one has
  \begin{align}\label{eq:diff-privy}
    \pr \setdef{\omega \in &\Omega}{\Mc(F', \omega) \in \Oc}
    \\
    \notag &\le e^{\epsilon_{i_0} \|f_{i_0} -
      f_{i_0}'\|_\Vc} \pr \setdef{\omega \in \Omega}{\Mc(F,
    \omega) \in \Oc}. \eqoprocend
  \end{align}
\end{definition}

Essentially, this notion requires the statistics of the output
of~$\Mc$ to change only (relatively) slightly if the objective
function of one agent changes (and the change is in $\Vc$), making it
hard for an adversary who observes the output of~$\Mc$ to determine
the change. In the case of an iterative asymptotic distributed
optimization algorithm, $\Mc$ represents the action (observed by the
adversary) of the algorithm on the set of local functions $F$. In
other words, $\Mc$ is the map (parameterized by the initial network
condition) that assigns to $F$ the whole sequence of messages
transmitted over the network. In this case,~\eqref{eq:diff-privy} has
to hold for all allowable initial conditions. We are ready to formally
state the network objective.

\begin{problem}\longthmtitle{Differentially private distributed
    optimization}\label{problem}
  Design a distributed and differentially private optimization
  algorithm whose guarantee on accuracy improves as the level of
  privacy decreases, leading to the exact optimizer of the aggregate
  objective function in the absence of privacy.  \oprocend
\end{problem}

The reason for the requirement of recovering the exact optimizer in
the absence of privacy in Problem~\ref{problem} is the following. It
is well-known in the literature of differential privacy that there
always exists a cost for an algorithm to be differentially private,
i.e., the algorithm inevitably suffers a performance loss that
increases as the level of privacy increases. This phenomenon is a
result of the noise added in the map~$\Mc$, whose variance increases
as $\epsilon$ decreases. With the above requirement on the noise-free
behavior of the algorithm, we aim to make sure that the cause of this
performance loss is \emph{only} due to the added noise and not to any
other factor.

\begin{example}\longthmtitle{Linear Classification with
    Logistic Loss Function}\label{example} 
  {\rm We introduce here a supervised classification problem that will
    serve to illustrate the discussion along the paper. Consider a
    database of training records composed by the labeled samples
    $\{(a_i, b_i)\}_{i = 1}^N$, where each $a_i \in \real^d$
    (containing the features of a corresponding object) may belong to
    one of two possible classes and $b_i \in \{-1, 1\}$ determines to
    which class it belongs. The goal is to train a classifier with the
    samples so that it can automatically classify future unlabeled
    samples. For simplicity, we let $d = 2$ and assume $a_i \in [0,
    1]^2$ and $b_i \in \{-1, 1\}$ are independently and uniformly
    randomly selected.  The aim is to find the best hyperplane $x^T a$
    that can separate the two classes. The parameters $x$ defining the
    hyperplane can be found by solving the convex problem,
    \begin{align}\label{eq:classifier}
      x^* = 
      \argmin_{x \in X} \sum_{i = 1}^N \Big(\ell(x; a_i, b_i) +
      \frac{\lambda}{2} |x|^2\Big),
    \end{align}
    where $\ell:\real^d \times \real^d \times \real \to \realpos$ is
    the loss function and $( \lambda/2 ) |x|^2$ is the regularizing
    term.  Since the objective function is strongly convex, we choose
    $X$ large enough so that $x^*$ is the same as the unique
    unconstrained minimizer. Popular choices of $\ell$ are the
    logistic loss $\ell(x; a_i, b_i) = \ln(1 + e^{-b_i a_i^T x})$ and
    the hinge loss $\ell(x; a_i, b_i) = \max\{0, 1 - b_i a_i^T
    x\}$. We focus on the logistic loss here due to its smoothness.

    Consider a group of $n$ agents, each one owning a portion $N_d = N
    / n$ of the training samples, who seek to collectively
    solve~\eqref{eq:classifier} in a distributed fashion, i.e., only
    by communicating with their neighbors (without a central
    aggregator). Various iterative algorithms have been proposed in
    the literature,
    cf.~\cite{AN-AO-PAP:10,BJ-MR-MJ:09,MZ-SM:12,BG-JC:14-tac,JCD-AA-MJW:12},
    to address this problem.  As an
    example,~\cite{AN-AO-PAP:10} proposes that each agent $i \in
    \until{n}$ starts with an initial estimate $x_i(0)$ of $x^*$ and,
    at each iteration $k$, update its estimate as
    \begin{subequations}
      \begin{align}
        x_i(k + 1) &= \proj_{X} (z_i(k) - \alpha_k \nabla
        f_i(z_i(k))), \label{eq:nedic-a}
        \\
        \label{eq:nedic-b} z_i(k) &= \sum_{j = 1}^n a_{i j} x_j(k),
      \end{align}
    \end{subequations}
    where $\{a_{i j}\}_{j = 1}^n$ are the edge weights of the
    communication graph at node $i$ and $\alpha_k$ is the
    stepsize. From~\eqref{eq:nedic-b}, one can see that agents only
    need to share their estimates with their neighbors to run the
    algorithm. Under reasonable connectivity assumptions, one can
    show~\cite{AN-AO-PAP:10} that $x_i(k)$ converges to $x^*$
    asymptotically if the sequence of stepsizes is square-summable
    ($\sum_k \alpha_k^2 < \infty$) but not summable ($\sum_k \alpha_k
    = \infty$).  In this paper, we are interested in endowing
    distributed coordination algorithms such as this with privacy
    guarantees so that their execution does not reveal information
    about the local objective functions to the adversary.} \oprocend
\end{example}

\section{Rationale for Design Strategy}\label{sec:rationale}

In this section, we discuss two algorithm design strategies to solve
Problem~\ref{problem} based on the perturbation of either inter-agent
messages or the local objective functions. We point out an important
limitation of the former, and this provides justification for the
ensuing design of our objective-perturbing algorithm based on
functional differential privacy.

\subsection{Limitations of Message-Perturbing
  Strategies}\label{subsec:mess-perturb}

We use the term \emph{message-perturbing strategy} to refer to the result of
modifying any of the distributed optimization algorithms available in
the literature by adding (Gaussian or Laplace) noise to the messages
agents send to either neighbors or a central aggregator in order to
preserve privacy.  A generic message-perturbing
distributed algorithm takes the form
\begin{equation}
  \begin{aligned}
    x(k + 1) &= a_\Ic(x(k), \xi(k)) ,
    \\
    \xi(k) &= x(k) + \eta(k) ,
  \end{aligned}
  \label{eq:nonlin-dyn}
\end{equation}
  where $\boldsymbol{\xi}$, $\boldsymbol{\eta}: \intnonneg \to
  \real^n$ are the sequences of messages and perturbations,
  respectively, and $a_\Ic:\real^n \times \real^n \to \real^n$ depends
  on the agents' sensitive information set~$\Ic$ with associated
  optimizer $x_\Ic^*$. This formulation is quite general and can also
encode algorithmic solutions for optimization problems other than the
one in Section~\ref{sec:prob-state}, such as the ones
  studied in~\cite{SH-UT-GJP:16,MTH-ME:15}. In the
problem of interest here,  $\Ic = F = \{f_i\}_{i = 1}^n$.

The following result provides conditions on the noise variance that
ensure that the noise vanishes asymptotically almost surely and
remains bounded with nonzero probability. 

\begin{lemma}\longthmtitle{Convergence and boundedness of Laplace and
    normal random sequences with decaying
    variance}\label{lem:conv-bound}
  Let $\boldsymbol{\eta}$ be a sequence of independent random
  variables defined over the sample space $\Omega = \real^\intpos$,
  with $\eta(k) \sim \Lap(b(k))$ or $\eta(k) \sim \Nc(0, b(k))$ for
  all $k \in \intpos$. Given $r > 0$, consider the events
  \begin{align*}
    E &= \setdef{\boldsymbol{\eta} \in \Omega}{\lim_{k \to \infty}
      \eta(k) = 0},
    \\
    F_r &= \setdef{\boldsymbol{\eta} \in \Omega}{\forall k \in \intpos
      \quad |\eta(k)| \le r}.
  \end{align*}
  If $b(k)$ is $O(\frac{1}{k^p})$ for some $p > 0$, then
  $\pr(E) = 1$ and $\pr(F_r) = \pr(F_r \cap E) > 0$ for all $r > 0$.
\end{lemma}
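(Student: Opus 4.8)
The plan is to prove $\pr(E) = 1$ first, via the first Borel--Cantelli lemma, and then obtain both assertions about $F_r$ as consequences. The starting point is an explicit tail estimate for each fixed threshold $\epsilon > 0$. In the Laplace case $|\eta(k)|$ is exponential with rate $1/b(k)$, so $\pr\{|\eta(k)| > \epsilon\} = e^{-\epsilon/b(k)}$. In the normal case, writing $\sigma^2 = b(k)$ and using the bound $\erf(x) \ge 1 - e^{-x^2}$ from Section~\ref{sec:prelims}, we get $\pr\{|\eta(k)| > \epsilon\} = 1 - \erf\big(\epsilon/(\sqrt 2\,\sigma)\big) \le e^{-\epsilon^2/(2 b(k))}$. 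In both cases the tail is of the form $e^{-c/b(k)}$ with $c > 0$ depending only on $\epsilon$ (namely $c = \epsilon$ for Laplace and $c = \epsilon^2/2$ for normal).

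Next I would invoke the hypothesis $b(k) = O(1/k^p)$: there are $C > 0$ and $k_0$ with $b(k) \le C/k^p$ for all $k \ge k_0$, whence the tail is at most $e^{-(c/C)\,k^p}$ for such $k$. The elementary fact I rely on is that $\sum_k e^{-a k^p}$ converges for every $a > 0$ and every $p > 0$: since $a k^p - 2\ln k \to \infty$, eventually $e^{-a k^p} \le k^{-2}$, which is summable. Therefore $\sum_k \pr\{|\eta(k)| > \epsilon\} < \infty$ for each fixed $\epsilon$, and Borel--Cantelli gives $\pr\big(\{|\eta(k)| > \epsilon \text{ infinitely often}\}\big) = 0$. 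Since $\{\eta(k) \not\to 0\} = \bigcup_{m \in \intpos} \{|\eta(k)| > 1/m \text{ infinitely often}\}$ is a countable union of null sets, $\pr(E) = 1$.

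The two statements about $F_r$ now follow quickly. The identity $\pr(F_r) = \pr(F_r \cap E)$ is immediate, because $\pr(F_r \cap E^c) \le \pr(E^c) = 0$. For positivity, independence and continuity of measure from above give $\pr(F_r) = \prod_{k=1}^\infty \pr\{|\eta(k)| \le r\}$. Each factor is strictly positive, since both densities are positive on $[-r, r]$; and writing $a_k = \pr\{|\eta(k)| > r\} \in [0,1)$, the standard criterion for infinite products shows that $\prod_k (1 - a_k)$ has a strictly positive limit exactly when $\sum_k a_k < \infty$. This series is precisely the one shown to converge above with $\epsilon = r$, so $\pr(F_r) > 0$.

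I expect no serious obstacle, but two points require care. First, $b(k)$ plays different roles in the two cases --- the scale parameter for Laplace and the variance for normal --- so the tail exponents must be computed accordingly to keep $c$ correct. Second, the summability of $\sum_k e^{-a k^p}$ must be argued for the sub-geometric regime $0 < p < 1$ as well; the comparison with $k^{-2}$ handles all $p > 0$ at once. The underlying measure-theoretic setup on $\Omega = \real^\intpos$ with the product measure makes the use of independence and of the infinite-product formula routine.
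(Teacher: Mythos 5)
Your proof is correct. The core ingredients are the same as the paper's: the exponential tail $e^{-c/b(k)}$ in both the Laplace and normal cases (via the $\erf$ lower bound), the summability of $\sum_k e^{-ak^p}$ for every $a,p>0$, and the criterion that $\prod_k(1-a_k)>0$ exactly when $\sum_k a_k<\infty$ with each $a_k<1$. Where you genuinely diverge is in the treatment of $\pr(E)=1$: you invoke the first Borel--Cantelli lemma directly, writing $E^c$ as a countable union over $m$ of the events $\{|\eta(k)|>1/m \text{ i.o.}\}$, each of which is null because the tail probabilities are summable. The paper instead reconstructs this from scratch, introducing nested events $E_{\ell,K}=\{|\eta(k)|<\upsilon_\ell,\ \forall k\ge K\}$, writing $E=\bigcap_\ell\bigcup_K E_{\ell,K}$, and passing to the limit via continuity of measure applied to the explicit infinite products $\prod_{k\ge K}(1-e^{-\upsilon_\ell/b(k)})$. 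Your route is shorter and outsources the measure-theoretic bookkeeping to a standard lemma; the paper's is self-contained and keeps everything in terms of the same product formula it already needs for $\pr(F_r)$. You also reverse the order (proving $\pr(E)=1$ first and then deducing $\pr(F_r)=\pr(F_r\cap E)$ from $\pr(E^c)=0$, which is the cleaner direction), whereas the paper establishes $\pr(F_r)>0$ first. Both arguments are sound; yours is arguably the more economical presentation.
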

\begin{proof}
  First, consider the case where $\eta(k) \sim \Lap(b(k))$. By the
  independence of the random variables and the fact that $|\eta(k)|$ is
  exponentially distributed with rate $\frac{1}{b(k)}$,
  \begin{align*}
    \pr(F_r) = \prod_{k = 1}^\infty \left(1 - e^{-\frac{r}{b(k)}}
    \right).
  \end{align*}
  By assumption, $b(k) \le \frac{c}{k^p}$ for all $k \in \intpos$ and
  some $p, c > 0$. Thus, given that the series $\sum_{k = 1}^\infty
  e^{-\frac{r}{c} k^p}$ converges~\cite[\S 1.14]{HJ-BSJ:99},
  \begin{align*}
    \pr(F_r) \ge \prod_{k = 1}^\infty \left(1 - e^{-\frac{r}{c} k^p}
    \right) > 0.
  \end{align*}
  Next, let $E_{\ell, K} = \setdef{\boldsymbol{\eta} \in
    \Omega}{\forall k \ge K \quad |\eta(k)| < \upsilon_\ell}$ where
  $\{\upsilon_\ell\}_{\ell = 1}^\infty$ is a monotonically decreasing
  sequence that converges to zero as $\ell \to \infty$ (e.g.,
  $\upsilon_\ell = \frac{1}{\ell}$). Note that
  \begin{align*}
    E = \bigcap_{\ell = 1}^\infty \bigcup_{K = 1}^\infty E_{\ell, K}.
  \end{align*}
  Since $E_{\ell, K} \uparrow \bigcup_{K = 1}^\infty E_{\ell, K}$ for all
  $\ell \in \intpos$ as $K \to \infty$, and $\bigcup_{K = 1}^\infty
  E_{\ell, K} \downarrow E$ as $\ell \to \infty$, we have
  \begin{align*}
    \pr(E) &= \lim_{\ell \to \infty} \lim_{K \to \infty} \pr(E_{\ell,
      K}) = \lim_{\ell \to \infty} \lim_{K \to \infty} \prod_{k =
      K}^\infty \left(1 - e^{-\frac{\upsilon_\ell}{b(k)}} \right)
    \\
    &\ge \lim_{\ell \to \infty} \lim_{K \to \infty} \prod_{k =
      K}^\infty \left(1 - e^{-\frac{\upsilon_\ell}{c} k^p} \right) = 1 .
  \end{align*}
  Then, $\pr(F_r \cap E) = \pr(F_r) - \pr(F_r \cap E^c) = \pr(F_r) >
  0$.  For the case of normal distribution of random variables,
  \begin{align*}
    \pr\{|\eta(k)| \le r\} = \erf \bigg(\frac{r}{\sqrt{2 b(k)}}\bigg)
    \ge 1 - e^{-\frac{r^2}{2 b(k)}},
  \end{align*}
  and the results follows from the arguments above.
\end{proof}

Note that Lemma~\ref{lem:conv-bound} also ensures that the probability
that the noise simultaneously converges to zero and remains bounded is
nonzero. One might expect that Lemma~\ref{lem:conv-bound} would hold
if $b(k) \to 0$ at any rate. However, this is not true. For instance,
if $b(k) = \frac{1}{\log k}$, one can show that the probability that
$\eta(k)$ eventually remains bounded is zero for any bound $r \le 1$,
so the probability that $\eta(k) \to 0$ is zero as well.

The following result shows that a message-perturbing algorithm of the
form~\eqref{eq:nonlin-dyn} cannot achieve differential privacy if the
underlying (noise-free) dynamics are asymptotically
stable. For convenience, we employ the short-hand notation $\tilde
a_\Ic(x(k), \eta(k)) = a_\Ic(x(k), x(k) + \eta(k))$ to refer
to~\eqref{eq:nonlin-dyn}.

\begin{proposition}\longthmtitle{Impossibility result for 0-LAS
    message-perturbing algorithms}\label{prop:impos-result}
  Consider \emph{any} algorithm of the form~\eqref{eq:nonlin-dyn} with
  either $\eta_i(k) \sim \Lap(b_i(k))$ or $\eta_i(k) \sim \Nc(0,
  b_i(k))$. If $\tilde a_\Ic$ is 0-LAS relative to $x^*_\Ic$ for two
  information sets $\Ic$ and $\Ic'$ with different optimizers
  $x_{\Ic}^* \neq x_{\Ic'}^*$ and associated robust stability radii
  $\rho$ and $\rho'$, respectively, $b_i(k)$ is $O(\frac{1}{k^p})$ for
  all $i \in \until{n}$ and some $p > 0$, and at least one of the
  following holds,
  \begin{enumerate}
  \item $x_{\Ic}^*$ is not an equilibrium point of $x(k + 1) =
    \tilde a_{\Ic'}(x(k), 0)$ and $\tilde a_{\Ic'}$ is
    continuous,
  \item $x_{\Ic}^*$ belongs to the interior of
    $B(x_{\Ic'}^*, \rho')$,
  \end{enumerate}
  then, the algorithm cannot preserve the $\epsilon$-differentially
  privacy of the information set $\Ic$ for any $\epsilon>0$.
\end{proposition}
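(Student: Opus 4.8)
The plan is to exploit the tension between differential privacy---which forces the output distributions under $\Ic$ and $\Ic'$ to be comparable on \emph{every} measurable output set---and the asymptotic convergence guaranteed by 0-LAS, which pins the two executions to distinct limits $x_\Ic^* \neq x_{\Ic'}^*$. Since $\Mc$ outputs the transmitted message sequence $\boldsymbol{\xi} = \{x(k) + \eta(k)\}$, I will build a single output event that is reachable with positive probability under one information set yet has probability zero under the other; this already contradicts~\eqref{eq:diff-privy} for every finite~$\epsilon$.

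Concretely, first invoke the converse direction of~\cite[Theorem 1]{CC-ART:05} to upgrade the 0-LAS hypotheses on $\tilde a_\Ic$ and $\tilde a_{\Ic'}$ to LISS with radii $\rho,\rho'$, so that Proposition~\ref{prop:asym-gain} applies to both systems with respective gains $\kappa_\Ic,\kappa_{\Ic'}$. Fix the initial condition $x(0)=x_\Ic^*$ (it suffices to violate privacy for one admissible initial condition) and pick $r>0$ with $r \le \min\{\kappa_\Ic^{-1}(\rho),\rho,\kappa_{\Ic'}^{-1}(\rho'),\rho'\}$. By Lemma~\ref{lem:conv-bound}, $\pr(E)=1$ and $\pr(F_r\cap E)=\pr(F_r)>0$, and on $F_r\cap E$ the noise stays below $r$ and vanishes. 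Define the output event $\Oc = \setdef{\boldsymbol{\xi}}{\lim_{k\to\infty}\xi(k) = x_\Ic^*}$. On $F_r\cap E$ the $\Ic$-execution satisfies the hypotheses of Proposition~\ref{prop:asym-gain}, so $x(k)\to x_\Ic^*$, and since $\eta(k)\to 0$ the messages converge to $x_\Ic^*$ as well; hence $\pr\setdef{\omega}{\Mc(\Ic,\omega)\in\Oc}\ge\pr(F_r\cap E)>0$.

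The crux is to show $\pr\setdef{\omega}{\Mc(\Ic',\omega)\in\Oc}=0$, i.e., that the $\Ic'$-execution almost surely does not converge to the wrong limit $x_\Ic^*$. Because $\eta(k)\to 0$ on $E$ and $\pr(E)=1$, it is enough to rule out convergence to $x_\Ic^*$ on $E$. Under assumption (i), if the $\Ic'$-trajectory converged to $x_\Ic^*$, then passing to the limit in $x(k+1)=\tilde a_{\Ic'}(x(k),\eta(k))$ and using continuity of $\tilde a_{\Ic'}$ would give $x_\Ic^* = \tilde a_{\Ic'}(x_\Ic^*,0)$, contradicting that $x_\Ic^*$ is not an equilibrium of the noise-free $\Ic'$-dynamics. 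Under assumption (ii), since $x_\Ic^*\in B(x_{\Ic'}^*,\rho')^o$ I set $\delta = \rho' - |x_\Ic^* - x_{\Ic'}^*|>0$; convergence to $x_\Ic^*$ together with $\eta(k)\to 0$ would produce a time $K$ with $x(K)\in B(x_\Ic^*,\delta)\subseteq B(x_{\Ic'}^*,\rho')$ and $\|\boldsymbol{\eta}_{[K]}\|_\infty \le \min\{\kappa_{\Ic'}^{-1}(\rho'),\rho'\}$, whence restarting Proposition~\ref{prop:asym-gain} at $K$ forces $x(k)\to x_{\Ic'}^*\neq x_\Ic^*$, a contradiction. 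In either case the $\Ic'$-trajectory avoids the limit $x_\Ic^*$ throughout $E$, so $\pr\setdef{\omega}{\Mc(\Ic',\omega)\in\Oc}\le\pr(E^c)=0$.

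Finally, applying~\eqref{eq:diff-privy} to the $\Vc$-adjacent pair with $F=\Ic'$ and $F'=\Ic$ would require $\pr\setdef{\omega}{\Mc(\Ic,\omega)\in\Oc}\le e^{\epsilon_{i_0}\|f_{i_0}-f_{i_0}'\|_\Vc}\,\pr\setdef{\omega}{\Mc(\Ic',\omega)\in\Oc}=0$, contradicting $\pr\setdef{\omega}{\Mc(\Ic,\omega)\in\Oc}>0$; since this holds for every finite $\epsilon$, no choice of $\epsilon$ preserves the privacy of $\Ic$. I expect the main obstacle to be the assumption-(ii) argument: one must show carefully that convergence to an \emph{interior} point of the stability ball forces re-entry into that ball at a moment when the noise has already fallen within the robust stability radius, so that the asymptotic-gain estimate can be re-applied from that moment onward---this is exactly where both the interiority of $x_\Ic^*$ and the almost-sure decay of the noise established in Lemma~\ref{lem:conv-bound} are essential.
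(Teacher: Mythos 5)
Your proposal is correct and follows essentially the same strategy as the paper's proof: initialize at $x_\Ic^*$, use Lemma~\ref{lem:conv-bound} and Proposition~\ref{prop:asym-gain} (after upgrading 0-LAS to LISS) to exhibit an output event with positive probability under $\Ic$ and probability zero under $\Ic'$, and contradict~\eqref{eq:diff-privy}. The only, harmless, deviations are that you take the output event to be the intrinsic limit set $\setdef{\boldsymbol{\xi}}{\xi(k)\to x_\Ic^*}$ rather than the forward image of the good noise set under the noise-to-message bijection (thereby avoiding that bijection and its preimage altogether), and that under hypothesis (i) you rule out convergence by passing to the limit in the update equation, which is cleaner than the paper's neighborhood argument.
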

\begin{proof}
  Our proof strategy consists of establishing that, if
    the initial state is close to the equilibrium of the system for
    one information set, the state trajectory converges to that
    equilibrium with positive probability but to the equilibrium of
    the system with the other information set with probability
    zero. We then use this fact to rule out differential privacy.
  For any fixed initial state $x_0$, if either of $\boldsymbol{\xi}$
  or $\boldsymbol{\eta}$ is known, the other one can be uniquely
  determined from~\eqref{eq:nonlin-dyn}. Therefore, the mapping
  $\Xi_{\Ic, x_0}:(\real^n)^\intpos \to (\real^n)^\intpos$ such that
  \begin{align*}
    \Xi_{\Ic, x_0}(\boldsymbol{\eta}) = \boldsymbol{\xi}
  \end{align*}
  is well-defined and bijective. Let $\kappa, \kappa' \in \Kc$ be as
  in~\eqref{eq:LISS} corresponding to $\tilde a_{\Ic}$ and $\tilde
  a_{\Ic'}$, respectively. Consider as initial condition $x_0 =
  x_{\Ic}^*$ and define
  \begin{align*}
    R = & \big\{ \boldsymbol{\eta} \in \Omega \; | \; \forall i
    \in \until{n}, \; \lim_{k \to \infty} \eta_i(k) = 0
    \\
    & \quad \text{and} \; |\eta_i(k)| \le
    \min\big\{\kappa^{-1}(\rho), \rho\big\} , \;
    \forall k \in \intpos \big\} .
  \end{align*}
  By Lemma~\ref{lem:conv-bound}, we have $\pr(R) > 0$.  By
  Proposition~\ref{prop:asym-gain}, since $|x_0 - x_{\Ic}^*| = 0 \le
  \rho$ and $\|\boldsymbol{\eta}\|_\infty \le
  \min\big\{\kappa^{-1}(\rho), \rho\big\}$ for all $\boldsymbol{\eta}
  \in R$, the sequence $\Xi_{\Ic,x_0}(\boldsymbol{\eta})$ converges to
  $x_{\Ic}^*$. Let $\Oc = \Xi_{\Ic, x_0}(R)$ and $R' = \Xi_{\Ic',
    x_0}^{-1} (\Oc)$ (where we are using the forward
    and inverse images of sets, respectively). Next, we show that no
  $\boldsymbol{\eta}' \in R'$ converges to $0$ under either hypothesis
  (i) or (ii) of the statement.  Under (i),
  there exists a neighborhood of $(x_{\Ic}^*, 0) \in \real^{2n}$ in
  which the infimum of the absolute value of at least one of the
  components of $\tilde a_{\Ic'}(x, \eta)$ is positive, so whenever
  $(x, \eta)$ enters this neighborhood, it exits it in finite
  time. Therefore, given that any $\mathbf{x} \in \Oc$ converges to
  $x_{\Ic}^*$, no $\boldsymbol{\eta}' \in R'$ can converge to zero.
  Under (ii),
  there exists a neighborhood of $x_{\Ic}^*$ included in
  $B(x_{\Ic'}^*, \rho')$. Since $\Xi_{\Ic',
    x_0}(\boldsymbol{\eta}') \to x_{\Ic}^*$, there exists
  $K \in \intpos$ such that $\Xi_{\Ic',
    x_0}(\boldsymbol{\eta}')(k)$ belongs to $B(x_{\Ic'}^*,
  \rho')$ for all $k \ge K$. Therefore, if $|\eta'(k)| \le
  \min\big\{(\kappa')^{-1}(\rho'), \rho'\big\}$
  indefinitely after any point of time, $\Xi_{\Ic',
    x_0}(\boldsymbol{\eta}') \to x_{\Ic'}^*$ by
  Proposition~\ref{prop:asym-gain} which is a contradiction, so
  $\boldsymbol{\eta}'$ cannot converge to zero.  In both cases,
  by Lemma~\ref{lem:conv-bound},
    $\pr(R') = 0$, 
  which, together with $\pr(R) > 0$ and the definition of
  $\epsilon$-differential privacy, cf.~\eqref{eq:diff-privy}, implies
  the result.
\end{proof}

Note that the hypotheses of Proposition~\ref{prop:impos-result} are
mild and easily satisfied in most cases. In particular, the result
holds if the dynamics are continuous and globally
asymptotically stable relative to $x^*_\Ic$ for two information sets.
The main take-away message of this result is that a globally
asymptotically stable distributed optimization algorithm cannot be
made differentially private by perturbing the inter-agent messages
with asymptotically vanishing noise. This observation is at the core
of the design choices made in the literature regarding the use of
stepsizes with finite sum to make the zero-input dynamics not
asymptotically stable, thereby causing a steady-state error in
accuracy which is present independently of the amount of noise
injected for privacy.  For instance, the algorithmic solution proposed
in~\cite{ZH-SM-NV:15} replaces~\eqref{eq:nedic-b} by
  $z_i(k) = \sum_{j = 1}^n a_{i j} \xi_j(k)$, where $\xi_j(k) = x_j(k)
  + \eta_j(k)$ is the perturbed message received from agent $j$, and
  chooses a finite-sum sequence of stepsizes $\{\alpha_k\}$ in the
  computation~\eqref{eq:nedic-a}, leading to a dynamical system which
is not 0-GAS, see Figure~\ref{fig:huang}.
\begin{figure}[htb]
  \centering
  \includegraphics[width = \linewidth]{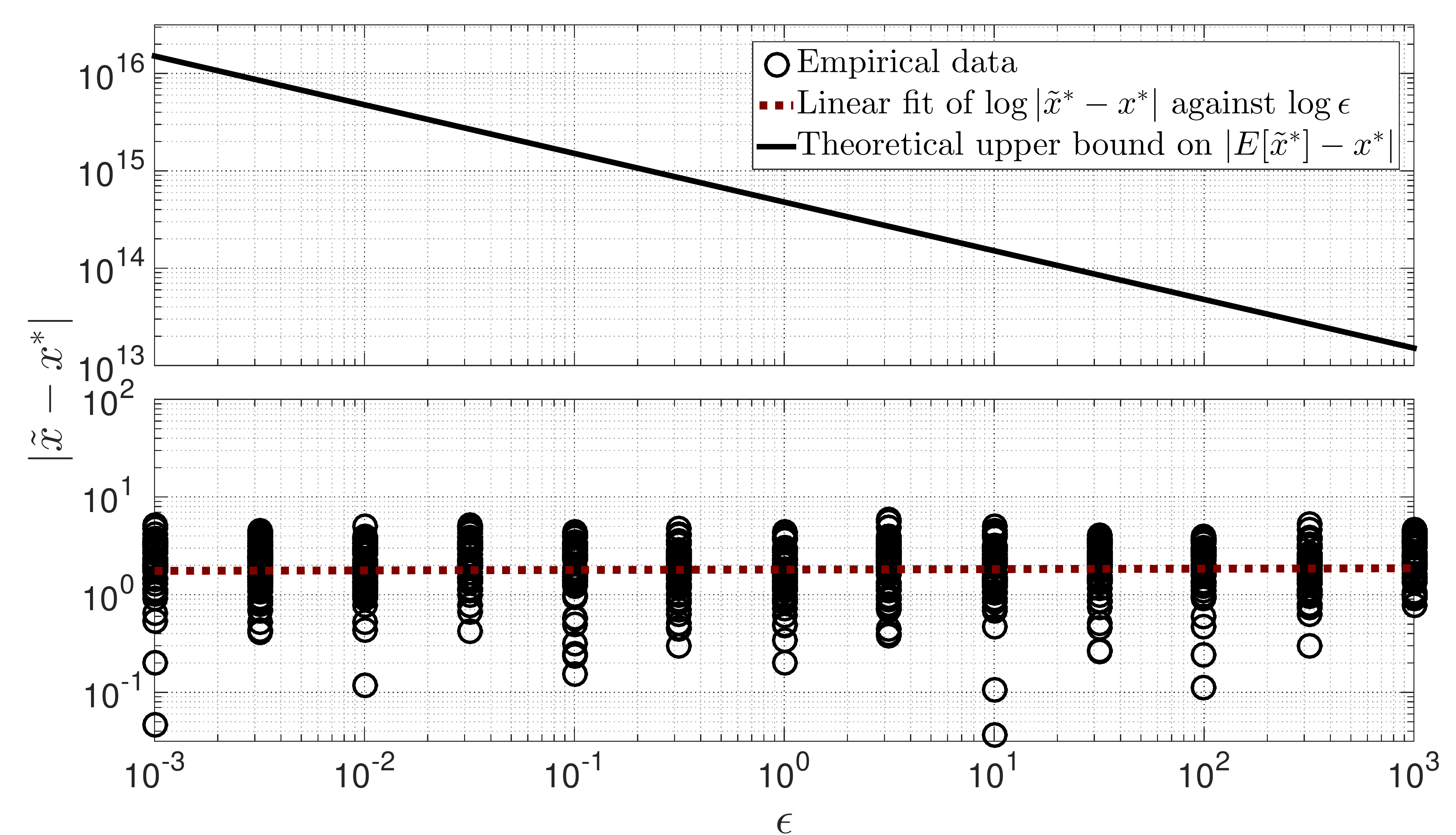}
  \caption{Privacy-accuracy trade-off for the algorithm proposed
    in~\cite{ZH-SM-NV:15} applied to Example~\ref{example} with $D = X
    = [-5, 5]^2$, $n = 10$, $N_d = 100$, and $\lambda = 0.01$. With
    that paper's notation, we set $q = 0.1$, $p = 0.11$, $c =
    0.5$. The stepsize $\alpha_k = c q^{k - 1}$ has finite sum.  The
    circles, dotted line, and solid line illustrate simulation results
    for 50 executions, their best linear fit in logarithmic scale, and
    the upper bound on accuracy provided in~\cite{ZH-SM-NV:15},
    respectively. We have broken the vertical axis to better display
    the scale of the algorithm output.}\label{fig:huang}
\end{figure}
Similar observations can be made in the scenario considered
in~\cite{SH-UT-GJP:16}, where the agents' local constraints are
the sensitive information (instead of the objective function). This
algorithmic solution uses a constant-variance noise, which would make
the dynamics unstable if executed over an infinite time horizon. This
problem is circumvented by having the algorithm terminate after a
finite number of steps, and optimizing this number offline as a
function of the desired level of privacy~$\epsilon$.

\subsection{Algorithm Design via Objective
  Perturbation}\label{sec:algorithm-design-plan}

To overcome the limitations of message-perturbing strategies, here we
outline an alternative design strategy to solve Problem~\ref{problem}
based on the perturbation of the agents' objective functions.  The
basic idea is to have agents independently perturb their objective
functions in a differentially private way and then have them
participate in a distributed optimization algorithm with the perturbed
objective functions instead of their original ones. In
  the context of Example~\ref{example}, this would correspond to
  leave~\eqref{eq:nedic-b} and the sequence of stepsizes unchanged,
  and instead use perturbed functions in the
  computation~\eqref{eq:nedic-a}. The latter in turn automatically
  adds noise to the estimates shared with neighbors. The following
result, which is a special case of \cite[Theorem 1]{JLN-GJP:14},
ensures that the combination with the distributed optimization
algorithm does not affect the differential privacy at the functional
level.

\begin{proposition}\longthmtitle{Resilience to
    post-processing}\label{prop:post-proc}
  Let $\Mc: L_2(D)^n \times \Omega \to L_2(D)^n$ be
  $\epsilon$-differentially private
    (cf. Definition~\ref{def:func-diff-privacy}) and $\Fc: L_2(D)^n
  \to \Xc$, where $(\Xc, \Sigma_\Xc)$ is an arbitrary measurable
  space. Then, $\Fc \circ \Mc: L_2(D)^n \times \Omega \to \Xc$ is
  $\epsilon$-differentially private.
\end{proposition}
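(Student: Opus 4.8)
The plan is to reduce the claim to a direct application of the $\epsilon$-differential privacy of $\Mc$, exploiting the elementary fact that taking preimages commutes with composition. The essential observation is that, since $\Fc$ is deterministic, the event that the composed map $\Fc \circ \Mc$ lands in a target set $\Oc \subseteq \Xc$ coincides exactly with the event that $\Mc$ lands in the preimage $\Fc^{-1}(\Oc) \subseteq L_2(D)^n$. Once this identification is made, the privacy inequality~\eqref{eq:diff-privy} for $\Mc$, applied to the set $\Fc^{-1}(\Oc)$, yields precisely the desired inequality for $\Fc \circ \Mc$, with no estimation required.

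Concretely, I would first fix two arbitrary $\Vc$-adjacent sets $F$ and $F'$ differing only in their $i_0$'th element, together with an arbitrary (measurable) set $\Oc \subseteq \Xc$. Setting $\Oc' \triangleq \Fc^{-1}(\Oc) \subseteq L_2(D)^n$, I would record the set identity
\begin{align*}
  \setdef{\omega \in \Omega}{(\Fc \circ \Mc)(F, \omega) \in \Oc}
  = \setdef{\omega \in \Omega}{\Mc(F, \omega) \in \Oc'},
\end{align*}
and the analogous identity with $F$ replaced by $F'$. Applying the differential privacy of $\Mc$, cf.~\eqref{eq:diff-privy}, with $\Oc'$ in the role of the output event gives
\begin{align*}
  \pr \setdef{\omega \in \Omega}{\Mc(F', \omega) \in \Oc'}
  \le e^{\epsilon_{i_0} \|f_{i_0} - f_{i_0}'\|_\Vc}
  \pr \setdef{\omega \in \Omega}{\Mc(F, \omega) \in \Oc'}.
\end{align*}
Substituting the two set identities back into this inequality reproduces~\eqref{eq:diff-privy} for $\Fc \circ \Mc$, which is exactly the conclusion.

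The only delicate point---and the step I expect to require the most care---is measurability: for the probabilities above to be well defined and for the privacy hypothesis on $\Mc$ to be applicable, the preimage $\Oc' = \Fc^{-1}(\Oc)$ must be an admissible event in Definition~\ref{def:func-diff-privacy} as applied to $\Mc$. This is immediate if one reads that definition literally as quantifying over all subsets $\Oc$, and otherwise follows upon assuming (as is standard, e.g.~in~\cite{JLN-GJP:14}) that $\Fc$ is measurable with respect to $\Sigma_\Xc$ and the Borel $\sigma$-algebra on $L_2(D)^n$, so that $\Oc' \in \Sigma_{L_2(D)^n}$ whenever $\Oc \in \Sigma_\Xc$. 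Beyond this bookkeeping, the argument is purely set-theoretic and does not involve the structure of the Laplace perturbation or of the adjacency space $\Vc$, which is exactly why the result holds for an arbitrary target measurable space $(\Xc, \Sigma_\Xc)$ and an arbitrary deterministic post-processing map $\Fc$.
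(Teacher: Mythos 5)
Your proof is correct, and it takes a genuinely more direct route than the paper's. The key identity you isolate---that for deterministic $\Fc$ the event $\setdef{\omega \in \Omega}{(\Fc \circ \Mc)(F, \omega) \in \Oc}$ coincides with $\setdef{\omega \in \Omega}{\Mc(F, \omega) \in \Fc^{-1}(\Oc)}$---immediately transfers~\eqref{eq:diff-privy} from $\Mc$ to $\Fc \circ \Mc$, and the measurability caveat you flag is indeed moot here: Definition~\ref{def:func-diff-privacy} quantifies over \emph{arbitrary} subsets $\Oc$ of the output space, and the paper's own proof explicitly works with the power-set $\sigma$-algebra $\Pc(L_2(D)^n)$, so $\Fc^{-1}(\Oc)$ is automatically an admissible event. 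The paper, by contrast, does not argue directly; it verifies the hypotheses of~\cite[Theorem 1]{JLN-GJP:14}, exhibiting the conditional law $\pr(M_2(F) \in S \mid M_1(F)) = \chi_S(\Fc(M_1(F)))$ as a probability kernel and then invoking that general post-processing theorem. Your argument is self-contained and more elementary, exploiting determinism of $\Fc$ to bypass the kernel machinery entirely; the paper's citation-based route buys generality (it would cover randomized post-processing with no extra work) and makes explicit where measurability enters the general framework. The underlying mathematical content---pulling back output events through the post-processing map---is the same in both.
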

\begin{proof}
  First, note that although a slightly different definition of
  differential privacy is used in~\cite{JLN-GJP:14}, the exact same
  proof of~\cite[Theorem 1]{JLN-GJP:14} works with
  Definition~\ref{def:func-diff-privacy}. Consider the
  $\sigma$-algebra $\Pc(L_2(D)^n)$ on $L_2(D)^n$ where $\Pc$ denotes
  the power set. With the notation of~\cite[Theorem 1]{JLN-GJP:14},
  $M_2 = \Fc \circ \Mc$ is a deterministic function of the output of
  $M_1 = \Mc$. Then, it is easy to verify that, for any $S \in
  \Sigma_\Xc$,
  \begin{align*}
    \pr(M_2(F) \in S \;| \; M_1(F)) = \chi_S(\Fc(M_1(F))),
  \end{align*}
  (with $\chi_\cdot$ being the indicator function) is measurable as a
  function of $M_1(F)$ (because $\Fc$ and $S$ are trivially
  measurable) and defines a probability measure on $(\Xc, \Sigma_\Xc)$
  (associated to a singleton), so it is a probability
  kernel. Hence, the conditions of~\cite[Theorem
    1]{JLN-GJP:14} are satisfied and $\Fc \circ \Mc$ is
    $\epsilon$-differentially private.
\end{proof}

Our design strategy based on the perturbation of individual
objective functions requires solving the following challenges:
\begin{enumerate}
\item establishing a differentially private procedure to perturb the
  individual objective functions;
\item ensuring that the resulting perturbed functions enjoy the
  smoothness and regularity properties required by distributed
  optimization algorithms to converge;
\item with (i) and (ii) in place, characterizing the accuracy of the
  resulting differentially private, distributed coordination
  algorithm.
\end{enumerate}

Section~\ref{sec:func-diff-privacy} addresses~(i) and
Section~\ref{sec:diff-private-dist-opt} deals with~(ii) and~(iii).

\section{Functional Differential Privacy}\label{sec:func-diff-privacy}

We explore here the concept of functional differential privacy to
address the challenge~(i) laid out in
Section~\ref{sec:algorithm-design-plan}.  The generality of this
notion makes it amenable for problems where the sensitive information
is a function or some of its attributes (e.g., sample points,
optimizers, derivatives and integrals). For simplicity of exposition
and without loss of generality, we limit our discussion in this
section to the privacy of a single function.

\subsection{Functional Perturbation via Laplace Noise}

Let $f \in L_2(D)$ be a function whose differential privacy has to be
preserved. With the notation of Section~\ref{sec:prelims}, we
decompose $f$ into its coefficients $\Phi^{-1}(f)$ and perturb this
sequence by adding noise to all of its elements. Specifically, we set
\begin{align}\label{eq:f-hat}
  \Mc(f, \boldsymbol{\eta}) = \Phi \left(\Phi^{-1}(f) +
    \boldsymbol{\eta} \right) = f + \Phi(\boldsymbol{\eta}),
\end{align}
where
\begin{align}\label{eq:eta}
  \eta_k \sim \Lap(b_k),
\end{align}
for all $k \in \intpos$. Clearly, for $\boldsymbol{\eta}$ to belong to
$\ell_2$ and for the series in $\Phi(\boldsymbol{\eta})$ to converge, the
scales $\lbrace b_k \rbrace_{k = 1}^\infty$ cannot be arbitrary.  The
next result addresses this issue.

\begin{lemma}\longthmtitle{Sufficient condition for boundedness of
    perturbed functions}\label{lem:l_2-condition}
  If there exists $K \in \intpos$ such that, for some $ p >
  \frac{1}{2}$ and $s > 1$,
  \begin{align}\label{eq:noise-decay}
    b_k \le \frac{1}{k^p \log k^s}, \quad \forall k \ge K ,
  \end{align}
  then $\boldsymbol{\eta}$ defined by~\eqref{eq:eta} belongs to $
  \ell_2$ with probability one. In particular, if for some $p >
  \frac{1}{2}$ and $\gamma > 0$,
  \begin{align}\label{eq:noise-decay-simple}
    b_k \le \frac{\gamma}{k^p}, \quad \forall k \in \intpos ,
  \end{align}
  then $\boldsymbol{\eta}$ defined by~\eqref{eq:eta} belongs to
  $\ell_2$ with probability one.
\end{lemma}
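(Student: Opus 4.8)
The plan is to reduce the almost-sure membership $\boldsymbol{\eta} \in \ell_2$ to the deterministic summability of the sequence of scales, via a single second-moment estimate. Since $\boldsymbol{\eta} \in \ell_2$ is by definition the event $\sum_{k = 1}^\infty \eta_k^2 < \infty$, and the summands $\eta_k^2$ are nonnegative, I would exhibit that the random variable $S \triangleq \sum_{k = 1}^\infty \eta_k^2$ has finite expectation; a nonnegative random variable with finite expectation is finite almost surely, which gives the result directly.

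First I would compute the individual second moments. A zero-mean Laplace variable with scale $b$ has variance $2 b^2$, so $\E[\eta_k^2] = 2 b_k^2$ for each $k \in \intpos$. Because the terms are nonnegative, Tonelli's theorem permits interchanging expectation with the infinite sum, yielding $\E[S] = \sum_{k = 1}^\infty \E[\eta_k^2] = 2 \sum_{k = 1}^\infty b_k^2$. Thus the whole statement reduces to showing that each of the two hypotheses forces $\sum_k b_k^2 < \infty$.

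Then I would verify the summability of $\{b_k^2\}$ by a $p$-series comparison. Under the first condition, for $k \ge K$ one has $b_k^2 \le k^{-2p} (\log k^s)^{-2} \le k^{-2p}$ (once $\log k^s \ge 1$); since $p > \tfrac{1}{2}$ gives $2p > 1$, the tail $\sum_{k \ge K} k^{-2p}$ converges and the finitely many initial terms are harmless, so $\sum_k b_k^2 < \infty$. The ``in particular'' case is identical: $b_k \le \gamma / k^p$ gives $b_k^2 \le \gamma^2 k^{-2p}$ with $2p > 1$. In both cases $\E[S] = 2 \sum_k b_k^2 < \infty$, hence $\pr\{S = \infty\} = 0$ and $\boldsymbol{\eta} \in \ell_2$ with probability one.

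As for difficulty, this argument is essentially routine and presents no substantive obstacle; the only points requiring care are measure-theoretic rather than combinatorial. One must justify the interchange of $\E$ and $\sum$ (legitimate here precisely because the summands are nonnegative, so Tonelli applies with no integrability hypothesis) and the final implication that finite expectation of a nonnegative variable forces its almost-sure finiteness. It is worth observing that, given the strict inequality $p > \tfrac{1}{2}$, the logarithmic factor $\log k^s$ and the requirement $s > 1$ are not actually needed for this particular conclusion, since comparison with the convergent series $\sum_k k^{-2p}$ already suffices; the stated decay condition is therefore comfortably, rather than tightly, sufficient for $\ell_2$ membership.
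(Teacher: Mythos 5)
Your proof is correct, but it follows a genuinely different route from the paper's. You reduce everything to a second-moment computation: $\E[\eta_k^2] = 2b_k^2$ for a Laplace variable of scale $b_k$, Tonelli gives $\E\big[\sum_k \eta_k^2\big] = 2\sum_k b_k^2 < \infty$ since $2p > 1$, and a nonnegative random variable with finite mean is finite almost surely. The paper instead runs a tail-event argument: it rewrites~\eqref{eq:noise-decay} as $e^{-1/(k^p b_k)} \le k^{-s}$, uses $s > 1$ to conclude that the infinite product $\prod_k \big(1 - e^{-1/(k^p b_k)}\big)$ converges, and deduces via continuity of probability that almost surely $|\eta_k| \le k^{-p}$ for all sufficiently large $k$, which implies $\ell_2$ membership deterministically on that event; the ``in particular'' case is then handled by absorbing $\gamma/k^{p-\bar p}$ into the logarithmic factor for a slightly smaller exponent $\bar p > \tfrac{1}{2}$. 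Your argument is more elementary, needs no infinite-product machinery, and as a byproduct produces the explicit moment bound $\E[\|\boldsymbol{\eta}\|^2] = 2\sum_k b_k^2$ (essentially the quantity the paper later needs in Theorem~\ref{thm:accuracy}). The paper's argument proves something strictly stronger than the stated conclusion, namely an almost-sure pointwise envelope $|\eta_k| \le k^{-p}$ eventually, and it is precisely this envelope argument that forces the logarithmic factor and the condition $s>1$ in~\eqref{eq:noise-decay} (without it, $\sum_k e^{-1/(k^p b_k)}$ need not converge, e.g.\ for $b_k = k^{-p}$ exactly). Your closing observation is therefore right: for the conclusion as literally stated, the logarithmic factor is not needed, and your comparison with the convergent series $\sum_k k^{-2p}$ already suffices.
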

\begin{proof}
  Equation~\eqref{eq:noise-decay} can be equivalently written as $
  e^{-\frac{1}{k^p b_k}} \le \frac{1}{k^s}$, for $k \ge K$. In
  particular, this implies that $\sum_{k=1}^\infty e^{-\frac{1}{k^p
      b_k}}$ is convergent. Therefore~\cite[\S 1.14]{HJ-BSJ:99},
  $\prod_{k = 1}^\infty \big(1 - e^{-\frac{1}{k^p b_k}} \big)$
  converges (i.e., the limit exists and is nonzero), so
  \begin{align*}
    1 &= \lim_{K \to \infty} \prod_{k = K}^\infty \left(1 -
      e^{-\frac{1}{k^p b_k}}\right) = \lim_{K \to \infty} \pr (E_K),
  \end{align*}
  where $E_K = \setdefb{\boldsymbol{\eta} \in \real^\intpos}{\forall k
    \ge K, \; |\eta_k| \le \frac{1}{k^p}}$ and we have used the fact
  that $|\eta_k|$ is exponentially distributed with rate
  $\frac{1}{b_k}$. Since $E_K \uparrow \bigcup_{K = 1}^\infty E_K$ as
  $K \to \infty$, we have
  \begin{align*}
    1 \!&=\! \pr \Big(\!\bigcup_{K = 1}^\infty E_K \!\Big)
    \!=\! \pr \setdefB{\boldsymbol{\eta} \in \real^\intpos\!}{\!\exists K \in
      \intpos \; \text{s.t.} \; \forall k \ge K\!\!:\! |\eta_k| \!\le
      \!\frac{1}{k^p}\!}
    \\
    & \le \pr \lbrace \boldsymbol{\eta} \in \ell_2 \rbrace,
  \end{align*}
  as stated. If equation~\eqref{eq:noise-decay-simple} holds, we
  define $\bar p = \frac{1}{2}(p + \frac{1}{2})$ and equivalently
  write~\eqref{eq:noise-decay-simple} as
  \begin{align*}
    b_k \le \frac{1}{k^{\bar p}} \frac{\gamma}{k^{p - \bar p}}, \quad
    \forall k \in \intpos .
  \end{align*}
  Since $p - \bar p > 0$, for any $s > 1$ there exists $K \in \intpos$
  such that $k^{p - \bar p} \ge \gamma \log k^s$ for all $k \ge K$,
  and the result follows.
\end{proof}

Having established conditions on the noise variance under which the
map~\eqref{eq:f-hat} is well defined, we next turn our attention to
establish its differential privacy.

\subsection{Differential Privacy of Functional
  Perturbation}\label{subsec:func-perturb}

Here, we establish the differential privacy of the
map~\eqref{eq:f-hat}. In order to do so, we first specify our choice
of adjacency space $\Vc$. Given $q > 1$, consider the weight sequence
$\{k^q\}_{k = 1}^\infty$ and define the adjacency vector space to be
the image of the resulting weighted $\ell_2$ space under $\Phi$, i.e.,
\begin{align}\label{eq:Vq}
  \Vc_q = \Phi \big( \setdefb{\boldsymbol{\delta} \in
    \real^\intpos}{\sum_{k = 1}^\infty (k^q \delta_k)^2 < \infty}
  \big).
\end{align}
It is not difficult to see that $\Vc_q$ is a vector space. Moreover,
\begin{align*}
  \| f \|_{\Vc_q} \triangleq \Big( \sum_{k = 1}^\infty (k^q
  \delta_k)^2 \Big)^\frac{1}{2} , \quad \text{with }
  \boldsymbol{\delta} = \Phi^{-1}( f),
\end{align*}
is a norm on~$\Vc_q$. The next result establishes the differential
privacy of~\eqref{eq:f-hat} for a properly chosen noise
scale sequence $\mathbf{b}$.

\begin{theorem}\longthmtitle{Differential privacy of functional
    perturbation}\label{thm:diff-privacy}
  Given $q>1$, $\gamma>0$ and $p \in \left (\frac{1}{2}, q -
    \frac{1}{2} \right)$, let
  \begin{align}\label{eq:b}
    b_k = \frac{\gamma}{k^p}, \quad k \in \intpos.
  \end{align}
  Then, the map~\eqref{eq:f-hat} is $\epsilon$-differentially private
  with
  \begin{align}\label{eq:eps}
    \epsilon = \frac{1}{\gamma} \sqrt{\zeta(2 (q - p))},
  \end{align}
  where $\zeta$ is the Riemann zeta function.
\end{theorem}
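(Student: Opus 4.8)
The plan is to transfer the entire problem from $L_2(D)$ to the coefficient space $\ell_2$ via the linear bijection $\Phi$, and then reduce differential privacy to a likelihood-ratio bound that follows from the elementary Laplace inequality together with a single application of Cauchy--Schwarz. Write $\boldsymbol{\theta} = \Phi^{-1}(f)$ and $\boldsymbol{\theta}' = \Phi^{-1}(f')$, so that $\Mc(f, \boldsymbol{\eta}) = \Phi(\boldsymbol{\theta} + \boldsymbol{\eta})$ and $\Mc(f', \boldsymbol{\eta}) = \Phi(\boldsymbol{\theta}' + \boldsymbol{\eta})$; well-posedness of these (i.e.\ $\boldsymbol{\eta} \in \ell_2$ almost surely) is already guaranteed by Lemma~\ref{lem:l_2-condition} since $b_k = \gamma/k^p$ with $p > \tfrac{1}{2}$. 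Because $\Phi$ is a bijection, for any $\Oc$ the event $\{\Mc(f, \boldsymbol{\eta}) \in \Oc\}$ equals $\{\boldsymbol{\theta} + \boldsymbol{\eta} \in A\}$ with the fixed set $A = \Phi^{-1}(\Oc) \subseteq \real^\intpos$, and measurability of $A$ is inherited from the continuity of $\Phi$. Setting $\boldsymbol{\delta} = \boldsymbol{\theta} - \boldsymbol{\theta}' = \Phi^{-1}(f - f')$, the $\Vc_q$-adjacency of $f$ and $f'$ means exactly that $\boldsymbol{\delta}$ lies in the weighted space of~\eqref{eq:Vq}, with $\|f - f'\|_{\Vc_q} = (\sum_k (k^q \delta_k)^2)^{1/2}$. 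Thus the theorem becomes a quasi-invariance estimate for the product Laplace law $\mu = \prod_k \Lap(b_k)$ under the shift by $\boldsymbol{\delta}$.

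The core of the argument is a uniform bound on the Radon--Nikodym derivative between the shifted laws. Componentwise, the stated inequality $\Lc(x;b)/\Lc(y;b) \le e^{|x - y|/b}$ gives, with $|(\xi_k - \theta_k') - (\xi_k - \theta_k)| = |\delta_k|$,
\begin{align*}
  \prod_{k=1}^\infty \frac{\Lc(\xi_k - \theta_k'; b_k)}{\Lc(\xi_k - \theta_k; b_k)}
  \le \prod_{k=1}^\infty e^{|\delta_k|/b_k}
  = \exp\Big(\sum_{k=1}^\infty \tfrac{|\delta_k|}{b_k}\Big).
\end{align*}
Substituting $b_k = \gamma/k^p$ and applying Cauchy--Schwarz with the split $|\delta_k| k^p = (k^q|\delta_k|)\,k^{p-q}$ yields
\begin{align*}
  \sum_{k=1}^\infty \frac{|\delta_k|}{b_k}
  = \frac{1}{\gamma}\sum_{k=1}^\infty (k^q|\delta_k|)\,k^{p-q}
  \le \frac{1}{\gamma}\Big(\sum_{k=1}^\infty (k^q\delta_k)^2\Big)^{\!1/2}
      \Big(\sum_{k=1}^\infty k^{2(p-q)}\Big)^{\!1/2}.
\end{align*}
The hypothesis $p < q - \tfrac{1}{2}$ forces $2(q-p) > 1$, so the second factor equals $\sqrt{\zeta(2(q-p))}$ and is finite, while the first factor is $\|f - f'\|_{\Vc_q}$. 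This produces exactly the exponent $\epsilon\,\|f - f'\|_{\Vc_q}$ with $\epsilon = \gamma^{-1}\sqrt{\zeta(2(q-p))}$ of~\eqref{eq:eps}.

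Integrating this pointwise bound over $A = \Phi^{-1}(\Oc)$ then gives $\pr\{\Mc(f', \boldsymbol{\eta}) \in \Oc\} \le e^{\epsilon\|f - f'\|_{\Vc_q}}\,\pr\{\Mc(f, \boldsymbol{\eta}) \in \Oc\}$, which is precisely~\eqref{eq:diff-privy}. The step I expect to require the most care is the passage from the familiar finite-dimensional Laplace-mechanism calculation to the infinite product: on $\real^\intpos$ there is no reference Lebesgue measure, so the density-ratio manipulation must be justified as a genuine quasi-invariance statement for $\mu$. The enabling fact is the finiteness of $\sum_k |\delta_k|/b_k$ established above, which is the Laplace analogue of the Cameron--Martin condition and makes the two shifted product measures mutually absolutely continuous with convergent product derivative. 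Concretely, I would first establish the inequality for cylinder sets depending on finitely many coordinates, where only the partial sum $\sum_{k \le N}|\delta_k|/b_k \le \epsilon\|f - f'\|_{\Vc_q}$ is needed, and then extend to an arbitrary measurable $\Oc$ by a monotone-class argument, using that the partial products increase to the full product.
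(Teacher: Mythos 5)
Your proposal is correct and follows essentially the same route as the paper: reduce to the coefficient space via $\Phi$, bound the ratio of shifted product Laplace densities componentwise, and apply Cauchy--Schwarz with the split $|\delta_k|/b_k = (k^q|\delta_k|)\,k^{p-q}/\gamma$ to obtain $\epsilon = \gamma^{-1}\sqrt{\zeta(2(q-p))}$. The only difference is cosmetic: where you package the finite-to-infinite passage as a quasi-invariance/Radon--Nikodym statement verified on cylinder sets, the paper works directly with the truncations $\Phi_K^{-1}$ and passes to the limit $K \to \infty$ using continuity of probability, which is the same finite-dimensional approximation in different clothing.
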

\begin{proof}
 Note that the map $\Mc$ defined by~\eqref{eq:f-hat} is
    well defined because~\eqref{eq:b} ensures, by
    Lemma~\ref{lem:l_2-condition}, that $\boldsymbol{\eta}$ belongs to
    $ \ell_2$ almost surely.  Our proof consists of showing that $\Mc$
    satisfies the definition of differential privacy,
    cf. Definition~\ref{def:func-diff-privacy}. To this effect,
  consider two functions $f$ and $f'$, with $ f - f' \in \Vc_q$, and
  an arbitrary set $\Oc \subseteq L_2(D)$. Let $\Phi_K^{-1}:L_2(D) \to
  \real^K$ be the map that returns the first $K$ coefficients of
  $\Phi^{-1}(\cdot)$ and
   \begin{align*}
     \Lc^K(\boldsymbol\eta_K; \mathbf b_K) \triangleq \prod_{k =
       1}^K \Lc(\eta_k; b_k).
   \end{align*}
   We have
   \begin{align*}
    \pr \lbrace f + \Phi(\boldsymbol\eta) \in \Oc
    \rbrace &= \pr \lbrace \boldsymbol\eta \in \Phi^{-1} (\Oc
    - f) \rbrace
    \\
    &= \lim_{K \to \infty} \int_{\Phi^{-1}_K (\Oc - f)}
    \Lc^K(\boldsymbol\eta_K; \mathbf b_K) d
    \boldsymbol\eta_K,
  \end{align*}
  where $\Phi^{-1}_K (\Oc - f)$ denotes the inverse image of the set
  $\Oc - f = \setdef{g \in L_2(D)}{g + f \in \Oc}$ and the second
  equality follows from the continuity of
  probability~\cite[Theorem~1.1.1.iv]{RD:10} (since $\Phi^{-1}_K (\Oc
  - f) \times \real^\intpos \downarrow \Phi^{-1} (\Oc - f)$ as $K \to
  \infty$). Similarly,
    \begin{align*}
      \pr \lbrace f' + \Phi(\boldsymbol\eta') &\in \Oc
      \rbrace = \lim_{K \to \infty} \int_{\Phi^{-1}_K (\Oc - f')}
      \Lc^K(\boldsymbol\eta_K'; \mathbf b_K) d
      \boldsymbol\eta_K'.
    \end{align*}
    By linearity of $\Phi_K$, we have $\Phi_K^{-1}(\Oc - f') =
    \Phi_K^{-1}(\Oc - f) + \boldsymbol \delta_K$ where $\boldsymbol
    \delta = \Phi^{-1}(f - f')$. Therefore,
  \begin{align*}
    \pr \lbrace f' + &\Phi(\boldsymbol\eta') \in \Oc \rbrace = \lim_{K
      \to \infty} \int_{\Phi^{-1}_K (\Oc - f)} \hspace{-15pt}
    \Lc^K(\boldsymbol\eta_K + \boldsymbol \delta_K; \mathbf b_K) d
    \boldsymbol\eta_K.
  \end{align*}
  Note that
  \begin{align*}
    \frac{\Lc^K(\boldsymbol\eta_K + \boldsymbol \delta_K; \mathbf
      b_K)}{\Lc^K(\boldsymbol\eta_K; \mathbf b_K)} = \prod_{k = 1}^K
    \frac{\Lc(\eta_k + \delta_k; b_k)}{\Lc(\eta_k; b_k)} \le
    e^{\sum_{k = 1}^K \frac{|\delta_k|}{b_k}}.
  \end{align*}
  After multiplying both sides by $\Lc^K(\boldsymbol\eta_K;
  \mathbf b_K)$, integrating over $\Phi^{-1}_K (\Oc - f)$, and
  letting $K \to \infty$, we have
  \begin{align*}
    \pr \lbrace f' + \Phi(\boldsymbol\eta') \in \Oc \rbrace \le
    e^{\sum_{k = 1}^\infty \frac{|\delta_k|}{b_k}} \pr \lbrace f +
    \Phi(\boldsymbol\eta) \in \Oc \rbrace.
  \end{align*}
  Finally, the coefficient of the exponential can be upper bounded
  using Holder's inequality with $p = q = 2$ as
  \begin{align*}
    &\sum_{k = 1}^\infty \frac{|\delta_k|}{b_k} = \sum_{k = 1}^\infty
    \frac{k^q |\delta_k|}{k^q b_k} \le \left(\sum_{k = 1}^\infty
      \frac{1}{(k^q b_k)^2} \right)^{\frac{1}{2}} \left(\sum_{k =
        1}^\infty (k^q \delta_k)^2 \right)^{\frac{1}{2}}
    \\
    &= \!\!\left(\sum_{k = 1}^\infty \frac{1}{(\gamma k^{q -
            p})^2} \right)^{\!\!\frac{1}{2}} \!\!\!\! \left\| f - f'
      \right\|_{\Vc_q}
    \!=\! \frac{1}{\gamma} \sqrt{\zeta(2 (q \!-\! p))} \left\| f - f'
    \right\|_{\Vc_q}\!,
  \end{align*}
  which completes the proof.
\end{proof}

\begin{remark}\longthmtitle{Choice of $q$} {\rm The choice of
    parameter~$q$ affects the trade-off between the size of the
    adjacency space~$\Vc_q$ and the noise required for
    privacy. From~\eqref{eq:Vq}, we see that decreasing $q$ makes
    $\Vc_q$ larger, allowing for the privacy preservation of a larger
    collection of functions. However, as expected, preserving privacy
    in a larger space requires more noise. From (12), $\gamma$ will be
    larger for a fixed $\epsilon$, (since $p$ cannot be decreased by
    the same amount as $q$ and $\zeta$ is monotonically decreasing),
    resulting in larger $b_k$ and larger noise. We show later in
    Theorem VI.2 that the guaranteed upper bound on the expected
    minimizer deviation also increases as $\{q_i\}_{i = 1}^n$
    decrease. \oprocend}
\end{remark}

\section{Differentially Private Distributed
  Optimization}\label{sec:diff-private-dist-opt}

In this section, we employ functional differential privacy to solve
the differentially private distributed optimization problem formulated
in Section~\ref{sec:prob-state} for a group of $n \in \intpos$ agents.
For convenience, we introduce the shorthand notation $ \Sc_0 = C^2(D)
\subset L_2(D)$ and, for given $\overline{u}>0$, $0 < \alpha < \beta$,
\begin{multline*}
  \Sc = \setdef{h \in \Sc_0}{|\nabla h(x)| \le \overline u, \; \forall
    x \in D
    \\
    \text{ and } \alpha I_d \le \nabla^2 h(x) \le \beta I_d, \;
    \forall x \in D^o} ,
\end{multline*}
for twice continuously differentiable functions with bounded gradients
and Hessians. In the rest of the paper, we assume that the agents' local
objective functions $f_1,\dots,f_n$ belong to~$\Sc$.

\subsection{Smoothness and Regularity of the Perturbed
  Functions}\label{sec:regularity}

We address here the challenge~(ii) laid out in
Section~\ref{sec:algorithm-design-plan}.  To exploit the framework of
functional differential privacy for optimization, we need to ensure
that the perturbed functions have the smoothness and regularity
properties required by the distributed coordination algorithm. In
general, the output of~\eqref{eq:f-hat} might neither be smooth nor
convex. We detail next how to address these problems by defining
appropriate maps that, when composed with $\Mc$ in~\eqref{eq:f-hat},
yield functions with the desired
properties. Proposition~\ref{prop:post-proc} ensures that differential
privacy is retained throughout this procedure.

\subsubsection{Ensuring Smoothness}\label{subsec:en-smooth}

To ensure smoothness, we rely on the fact that $\Sc_0$ is dense in
$L_2(D)$ and, therefore, given any function $g$ in $L_2(D)$, there
exists a smooth function arbitrarily close to it, i.e.,
\begin{align*}
  \forall \varepsilon > 0 , \; \exists \hat g^s \in \Sc_0 \quad
  \text{such that} \quad \|g - \hat g^s\| < \varepsilon.
\end{align*}
Here, $\varepsilon$ is a design parameter and can be chosen
sufficiently small (later, we show how to do this so that the accuracy
of the coordination algorithm is not affected).

  \begin{remark}\longthmtitle{Smoothening and Truncation}
    {\rm 
    A natural choice for the smoothening step, if the basis functions
    are smooth (i.e., $\{e_k\}_{k = 1}^\infty \subset \Sc_0$), is
    truncating the infinite expansion of~$g$. Such truncation is also
    inevitable in practical implementations due to the impossibility
    of handling infinite series.  The appropriate truncation order
    depends on the specific function, the basis set, and the noise
    decay rate ($p$ in~\eqref{eq:b}).
  } \oprocend
\end{remark}

\subsubsection{Ensuring Strong Convexity and Bounded
  Hessian}\label{subsec:en-convex}

The next result ensures that the orthogonal projection from $\Sc_0$
onto $\Sc$ is well defined, and can therefore be used to ensure strong
convexity and bounded Hessian of the perturbed functions.

\begin{proposition}\longthmtitle{Convexity of $\Sc$ and its closedness
    relative to $\Sc_0$}\label{prop:closure}
  The set $\Sc$ is convex and closed as a subset of $\Sc_0$ under the
  $2$-norm.
\end{proposition}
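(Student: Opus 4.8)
The plan is to prove the two properties separately, since both reduce to checking that the defining inequalities of $\Sc$ are preserved under the relevant operations. Recall that
\begin{align*}
  \Sc = \setdef{h \in \Sc_0}{|\nabla h(x)| \le \overline u \;\forall x \in D, \; \alpha I_d \le \nabla^2 h(x) \le \beta I_d \;\forall x \in D^o}.
\end{align*}
First I would establish convexity. Take $h_1, h_2 \in \Sc$ and $t \in [0,1]$, and set $h = t h_1 + (1-t) h_2$; clearly $h \in \Sc_0$ since $\Sc_0 = C^2(D)$ is a vector space. For the gradient bound, linearity of the gradient gives $\nabla h = t \nabla h_1 + (1-t) \nabla h_2$, so by the triangle inequality $|\nabla h(x)| \le t |\nabla h_1(x)| + (1-t)|\nabla h_2(x)| \le \overline u$. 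For the Hessian bounds, $\nabla^2 h = t \nabla^2 h_1 + (1-t) \nabla^2 h_2$, and since the matrix inequalities $\alpha I_d \le \nabla^2 h_i \le \beta I_d$ are preserved under convex combinations (the cone of positive semidefinite matrices is convex), we get $\alpha I_d \le \nabla^2 h(x) \le \beta I_d$ on $D^o$. Hence $h \in \Sc$ and $\Sc$ is convex.

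Next I would establish closedness of $\Sc$ as a subset of $\Sc_0$ under the $2$-norm (i.e.\ the $L_2(D)$ norm). Let $\{h_j\}_{j=1}^\infty \subset \Sc$ converge in $L_2(D)$ to some $h \in \Sc_0$; I must show $h \in \Sc$. The natural route is pointwise: the defining constraints are pointwise conditions on $\nabla h$ and $\nabla^2 h$, so I want to pass them to the limit at each fixed $x$. The obstacle is that $L_2$ convergence of the functions does \emph{not} immediately give convergence of their derivatives, which is exactly what the constraints involve. The cleanest way around this is to observe that the uniform bounds defining $\Sc$ confer equicontinuity and uniform boundedness: since every $h_j$ has $|\nabla h_j| \le \overline u$ and $|\nabla^2 h_j| \le \beta$ (in operator norm) on $D$, the families $\{h_j\}$, $\{\nabla h_j\}$ are uniformly bounded and equicontinuous on the compact set $D$. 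By the Arzel\`a--Ascoli theorem, passing to a subsequence, $h_j \to \tilde h$ and $\nabla h_j \to G$ uniformly on $D$ for some continuous $\tilde h$ and $G$; uniform convergence then forces $G = \nabla \tilde h$. Because uniform (hence $L_2$) convergence identifies $\tilde h$ with the $L_2$-limit $h$, we conclude $h \in C^1(D)$ with $\nabla h_j \to \nabla h$ uniformly, whence the gradient bound $|\nabla h(x)| \le \overline u$ passes to the limit directly.

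It remains to recover the Hessian bounds for the limit $h$, which is the main obstacle, since one more derivative must be controlled. I would handle the convexity-type bounds through an integrated (weak) formulation rather than fighting to pass $\nabla^2 h_j$ to the limit in a strong sense. Concretely, the lower bound $\nabla^2 h_j \ge \alpha I_d$ on $D^o$ is equivalent to the strong-convexity statement that $x \mapsto h_j(x) - \tfrac{\alpha}{2}|x|^2$ is convex on $D$, and the upper bound $\nabla^2 h_j \le \beta I_d$ is equivalent to $x \mapsto \tfrac{\beta}{2}|x|^2 - h_j(x)$ being convex. These convexity properties are characterized purely by first-order or zeroth-order inequalities (e.g.\ the midpoint or secant inequality $h_j(\tfrac{x+y}{2}) - \tfrac{\alpha}{2}|\tfrac{x+y}{2}|^2 \le \tfrac12[h_j(x) - \tfrac{\alpha}{2}|x|^2] + \tfrac12[h_j(y) - \tfrac{\alpha}{2}|y|^2]$), and such inequalities are \emph{stable under pointwise (in particular uniform) convergence}. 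Thus $h - \tfrac{\alpha}{2}|x|^2$ and $\tfrac{\beta}{2}|x|^2 - h$ are convex, and since $h \in \Sc_0 = C^2(D)$ by hypothesis, convexity of these $C^2$ functions is equivalent to $\alpha I_d \le \nabla^2 h(x) \le \beta I_d$ on $D^o$. Therefore $h \in \Sc$, establishing closedness. I would finish by noting that the subsequential extraction suffices: the $L_2$-limit $h$ is fixed in advance and every property we derived for it along the subsequence is a property of $h$ itself, so no diagonal argument is needed.
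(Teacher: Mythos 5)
Your convexity argument coincides with the paper's. For closedness, however, you take a genuinely different and arguably cleaner route. The paper splits $\Sc$ into $\Sc_1$ (Hessian bounds) and $\Sc_2$ (gradient bound): for $\Sc_1$ it extracts a pointwise a.e.\ convergent subsequence and invokes Rockafellar's theorem on pointwise limits of convex functions on a dense set; for $\Sc_2$ it runs a fairly delicate contradiction argument, using Egorov-type almost-uniform convergence and the fundamental theorem of line integrals to show that a limit with $|\nabla h(x_0)| > \overline u$ would force $\|h - h_{k_\ell}\|$ to stay bounded away from zero. You instead use the defining bounds of $\Sc$ to get equicontinuity of $\{h_j\}$ and $\{\nabla h_j\}$, apply Arzel\`a--Ascoli to upgrade $L_2$ convergence to uniform $C^1$ convergence along a subsequence, and then pass both the gradient bound and the (midpoint-convexity reformulation of the) Hessian bounds to the limit pointwise. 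This buys a more transparent proof of the gradient constraint and unifies the treatment of all three constraints; the paper's argument avoids any compactness machinery but pays for it with the longer $\Sc_2$ contradiction. One small patch you should make: uniform boundedness of $\{h_j\}$ (needed for Arzel\`a--Ascoli) does \emph{not} follow from membership in $\Sc$ alone --- e.g.\ $h_j(x) = \tfrac{\alpha}{2}|x|^2 + j$ can all lie in $\Sc$ --- so you must additionally use that the $L_2$-convergent sequence is $L_2$-bounded, which together with the uniform Lipschitz constant $\overline u$ on the compact set $D$ yields a uniform sup-norm bound. With that one line added, your argument is complete.
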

\begin{proof}
  The set $\Sc$ is clearly convex because, if $h_1, h_2 \in \Sc$ and
  $\lambda \in [0, 1]$, then for all $x \in D^o$,
  \begin{align*}
    \nabla^2((1 - \lambda) h_1(x) + \lambda h_2(x)) &= (1 - \lambda)
    \nabla^2 h_1(x) + \lambda \nabla^2 h_2(x)
    \\
    &\ge (1 - \lambda) \alpha I_d + \lambda \alpha I_d = \alpha I_d.
  \end{align*}
  Similarly, $\nabla^2((1 - \lambda) h_1(x) + \lambda h_2(x)) \le
  \beta I_d$. Also,
  \begin{align*}
    |\nabla ((1 - \lambda) h_1(x) + \lambda h_2(x))| &\le (1 -
    \lambda)|\nabla h_1(x)| + \lambda|\nabla h_2(x)|
    \\
    &\le (1 - \lambda) \overline u + \lambda \overline u \le \overline
    u,
  \end{align*}
  for all $x \in D$. To establish closedness, let
  \begin{align*}
    \Sc_1 &= \setdef{h \in \Sc_0}{\alpha I_d \le \nabla^2 h(x) \le
      \beta I_d, \; \forall x \in D^o},
    \\
    \Sc_2 &= \setdef{h \in \Sc_0}{|\nabla h(x)| \le \overline u, \;
      \forall x \in D }.
  \end{align*} 
  Since $\Sc = \Sc_1 \cap \Sc_2$, it is enough to show that $\Sc_1$
  and $\Sc_2$ are both closed subsets of $\Sc_0$.
  
  To show that $\Sc_1$ is closed, let $\{h_k\}_{k = 1}^\infty$ be a
  sequence of functions in $\Sc_1$ such that $h_k
  \xrightarrow{\|\cdot\|_2} h \in \Sc_0$. We show that $h \in
  \Sc_1$. Since $h_k - \frac{\alpha}{2} |x|^2 \xrightarrow{\|\cdot\|_2}
  h - \frac{\alpha}{2} |x|^2$ and $L_2$ convergence implies pointwise
  convergence of a subsequence almost everywhere, there exists
  $\{h_{k_\ell}\}_{\ell = 1}^\infty$ and $Y \subset D$ such that $m(D
  \setminus Y) = 0$ and $h_{k_\ell}(x) -
  \frac{\alpha}{2} |x|^2 \to h(x) - \frac{\alpha}{2} |x|^2$ for all $x
  \in Y$. It is straightforward to verify that $Y$ is dense in $D$ and
  therefore $Y \cap D^o$ is dense in $D^o$.  Then, by~\cite[Theorem
  10.8]{RTR:70}, $h - \frac{\alpha}{2} |x|^2$ is convex on $D^o$, so
  $\alpha I_2 \le \nabla^2 h(x)$ for all $x \in D^o$. Similarly, one
  can show that $\nabla^2 h(x) \le \beta I_d$ for all $x \in
  D^o$. Therefore, $h \in \Sc_1$.
  
  The proof of closedness of $\Sc_2$ is more involved. By
  contradiction, assume that $\{h_k\}_{k = 1}^\infty$ is a sequence of
  functions in $\Sc_2$ such that $h_k \xrightarrow{\|\cdot\|_2} h \in
  \Sc_0$ but $h \notin \Sc_2$. Therefore, there exist $x_0 \in D^o$
  such that $|\nabla h(x_0)| > \overline u$ and, by continuity of
  $\nabla h$, $\delta_0 > 0$ and $\upsilon_0 > 0$ such that
  \begin{align*}
    |\nabla h(x)| \ge \overline u + \upsilon_0, \quad \forall x \in
    B(x_0, \delta_0) \subseteq D .
  \end{align*}
  Let $ u_0 = \frac{\nabla h(x_0)}{|\nabla h(x_0)|}$.  By continuity
  of $\nabla h$, for all $\upsilon_1 > 0$ there exists $\delta_1 \in
  (0, \delta_0]$ such that
  \begin{align*}
    \nabla h(x) \cdot u_0 \ge (1 - \upsilon_1) |\nabla h(x)| , \quad
    \forall x \in B(x_0,\delta_1) .
  \end{align*}
  As mentioned above, $L_2$ convergence implies pointwise convergence
  of a subsequence $\{h_{k_\ell}\}_{\ell = 1}^\infty$ almost
  everywhere. In turn, this subsequence converges to $h$ almost
  uniformly, i.e., for all $\upsilon_2 > 0$ and all $\upsilon_3 > 0$,
  there exist $E \subset D$ and $L \in \intpos$ such that $m(E) <
  \upsilon_2$ and
  \begin{align}\label{eq:almost-unif-conv}
    |h_{k_\ell}(x) - h(x)| < \upsilon_3 , \quad \forall x \in D
    \setminus E \text{ and } \ell \ge L.
  \end{align}
  For ease of notation, let $\delta_2 = \delta_1/2$. Using the
  fundamental theorem of line integrals~\cite{REW-HFT:03},
  for all $x \in B(x_0,\delta_2) \setminus E$,
  \begin{align}\label{eq:lbound}
    & h(x + \delta_2 u_0) - h(x) = \int_x^{x + \delta_2 u_0} \nabla h
    \cdot d r \notag
    \\
    &\quad = \int_x^{x + \delta_2 u_0} \nabla h \cdot u_0 |d r| \ge
    \int_x^{x + \delta_2 u_0} (1 - \upsilon_1) |\nabla h| |d r| \notag
    \\
    &\quad \ge (1 - \upsilon_1)(\overline u + \upsilon_0) \delta_2.
  \end{align}
  Similarly, for all $x \in B(x_0,\delta_2) \setminus E$ and all $\ell
  \in \intpos$,
  \begin{align}\label{eq:ubound}
    \notag h_{k_\ell}(x + \delta_2 u_0) - h_{k_\ell}(x) &= \int_x^{x +
      \delta_2 u_0} \nabla h_{k_\ell} \cdot d r
    \\
    &\le \int_x^{x + \delta_2 u_0} |\nabla h_{k_\ell}| |d r| \le
    \overline u \delta_2.
  \end{align}
  Putting~\eqref{eq:lbound},~\eqref{eq:ubound},
  and~\eqref{eq:almost-unif-conv} together and choosing $\upsilon_3 =
  \upsilon_1 \delta_2 \overline u$, we have for all $x \in
  B(x_0,\delta_2) \setminus E$ and all $\ell \ge L$,
  \begin{align}\label{eq:lubound}
    h(x + \delta_2 u_0) - &h_{k_\ell}(x + \delta_2 u_0)
    \\
    \notag &\ge h(x) - h_{k_\ell}(x) + \delta_2(1 -
    \upsilon_1)(\overline u + \upsilon_0) - \delta_2 \overline u
    \\
    &\ge \delta_2(1 - \upsilon_1)(\overline u + \upsilon_0) -
    \delta_2(1 + \upsilon_1) \overline u \triangleq \upsilon_4 .
    \notag
  \end{align}
  The quantity $\upsilon_4$ can be made strictly positive choosing
  $\upsilon_1 = \frac{\upsilon_0}{4 \overline u + 3 \upsilon_0} >
  0$. Let $E^+ = E + \delta_2 u_0$ and $x_1 = x_0 + \delta_2
  u_0$. Then, \eqref{eq:lubound} can be rewritten as
  \begin{align*}
     h(x) - h_{k_\ell}(x) \ge \upsilon_4 , \quad \forall x \in
    \Nc_{\delta_2}(x_1) \setminus E^+ \text{ and } \ell \ge L ,
  \end{align*}
  which, by choosing $\upsilon_2 = \frac{1}{2} m(B(x_1,\delta_2))$, implies
  \begin{align*}
    & \int_{\Nc_{\delta_2}(x_1) \setminus E^+} |h(x) -
    h_{k_\ell}(x)|^2 d x \ge \upsilon_4^2 \cdot m(B(x_1,\delta_2)
    \setminus E^+)
    \\
    &\Rightarrow \|h - h_{k_\ell}\| \ge \upsilon_4
    \sqrt{m(B(x_1,\delta_2))/2}
    > 0.
  \end{align*}
  This contradicts $h_{k_\ell} \xrightarrow{\|\cdot\|_2} h$, so $\Sc_2$
  must be closed.
\end{proof}

Given the result in Proposition~\ref{prop:closure}, the best
approximation in $\Sc$ of a function $h \in \Sc_0$ is its unique
projection onto $\Sc$, i.e.,
\begin{align*}
  \tilde h = \proj_\Sc (h).
\end{align*}
By definition, $\tilde h$ has bounded gradient and Hessian.

\subsection{Algorithm Design and Analysis}\label{subsec:alg-des-anal}

We address here the challenge~(iii) laid out in
Section~\ref{sec:algorithm-design-plan} and put together the
discussion above to propose a class of differentially private,
distributed optimization algorithms that solve Problem~\ref{problem}.
Unlike the message-perturbing algorithms where agents use the original
objective functions in the computations and rely on perturbing the
inter-agent messages, here we propose that agents locally perturb
their objective functions and use them in their computations, without
adding any additional noise to the inter-agent messages. Therefore, we
require each agent $i \in \{1, \dots, n \}$ to first compute
\begin{subequations}\label{eq:perturb}
  \begin{align}\label{eq:perturb-I}
    \hat f_i = \Mc(f_i, \boldsymbol{\eta}_i) = f_i +
    \Phi(\boldsymbol{\eta}_i) ,
  \end{align}
  where $\boldsymbol{\eta}_i$ is a sequence of Laplace noise generated
  by~$i$ according to~\eqref{eq:eta} with the choice~\eqref{eq:b}, then
  select $\hat f^s_i \in \Sc_0$ such that
  \begin{align}\label{eq:perturb-II}
    \|\hat f_i - \hat f^s_i\| < \varepsilon_i,
  \end{align}
  and finally compute
  \begin{align}\label{eq:perturb-III}
    \tilde f_i = \proj_\Sc (\hat f^s_i) .
  \end{align}
\end{subequations}
After this process, agents participate in \emph{any} distributed
optimization algorithm with the modified objective functions $\{\tilde
f_i\}_{i = 1}^n$.  Let
\begin{align*}
  \tilde{x}^* = \argmin_{x \in X} \sum_{i = 1}^n \tilde f_i \quad
  \text{and} \quad x^* = \argmin_{x \in X} \sum_{i = 1}^n f_i ,
\end{align*}
denote, respectively, the output of the distributed algorithm and the
optimizer for the original optimization problem (with objective
functions $\{f_i\}_{i = 1}^n$).  The following result establishes the
connection between the algorithm's accuracy and the design parameters.

\begin{theorem}\longthmtitle{Accuracy of the proposed class of distributed,
    differentially private coordination algorithms}\label{thm:accuracy}
  Consider a group of $n$ agents which perturb their local objective
  functions according to~\eqref{eq:perturb} with Laplace
  noise~\eqref{eq:eta} of variance~\eqref{eq:b}, where $q_i > 1$,
  $\gamma_i > 0$, and $p_i \in \left (\frac{1}{2}, q_i - \frac{1}{2}
  \right)$ for all $i \in \until{n}$.  Let the agents participate in
  any distributed coordination algorithm that asymptotically converges
  to the optimizer~$\tilde{x}^*$ of the perturbed aggregate objective
  function. Then, $\epsilon_i$-differential privacy of each agent
  $i$'s original objective function is preserved with $\epsilon_i =
  \sqrt{\zeta(2(q_i - p_i))}/ \gamma_i$ and
  \begin{align*}
    \E \left|\tilde{x}^* - x^* \right| \le \sum_{i = 1}^n
    \kappa_{n} \left(\gamma_i \sqrt{\zeta(2 p_i)} \right)
    + \kappa_{n}(\varepsilon_i),
  \end{align*}
  where the function $\kappa_n \equiv \kappa_{n\alpha,n\beta} \in \Kc_\infty$ is
  defined in Proposition~\ref{prop:argmin-lip}.
\end{theorem}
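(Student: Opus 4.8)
The plan is to dispatch the two assertions separately, leaning entirely on the machinery already in place: Theorem~\ref{thm:diff-privacy} and Proposition~\ref{prop:post-proc} for privacy, and the $\argmin$-Lipschitz estimate of Proposition~\ref{prop:argmin-lip} for accuracy.

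For privacy I would first note that the raw functional perturbation $f_i \mapsto \hat f_i = f_i + \Phi(\boldsymbol{\eta}_i)$ is well defined, since the choice~\eqref{eq:b} together with Lemma~\ref{lem:l_2-condition} places $\boldsymbol{\eta}_i$ in $\ell_2$ almost surely, and that by Theorem~\ref{thm:diff-privacy} this map is $\epsilon_i$-differentially private with $\epsilon_i = \sqrt{\zeta(2(q_i-p_i))}/\gamma_i$. Because each agent draws its noise independently, for two $\Vc_{q_{i_0}}$-adjacent sets $F,F'$ differing only in the $i_0$-th function the joint density of $(\hat f_1,\dots,\hat f_n)$ factorizes and every factor with $i\neq i_0$ is identical, so the joint map $F \mapsto (\hat f_1,\dots,\hat f_n)$ inherits the $\epsilon_{i_0}$ bound. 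The subsequent operations — the deterministic smoothening selection $\hat f_i \mapsto \hat f_i^s$, the projection $\hat f_i^s \mapsto \tilde f_i = \proj_\Sc(\hat f_i^s)$, and the distributed algorithm assigning $\tilde x^*$ (and the transmitted message sequence) to $\{\tilde f_i\}$ — are all deterministic functions of the perturbed functions. Proposition~\ref{prop:post-proc} then yields that the composite map is $\epsilon_i$-differentially private, which is the privacy claim.

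For the accuracy bound I would introduce the hybrid sequence
\[
  x_j = \argmin_{x \in X} \Big( \sum_{i=1}^{j} \tilde f_i + \sum_{i=j+1}^{n} f_i \Big), \quad j = 0,1,\dots,n,
\]
so that $x_0 = x^*$, $x_n = \tilde x^*$, each $x_j$ is the unique minimizer (by strong convexity), and $|\tilde x^* - x^*| \le \sum_{j=1}^n |x_j - x_{j-1}|$. Consecutive hybrids differ only in replacing $f_j$ by $\tilde f_j$, and since every $f_i,\tilde f_i \in \Sc$ each aggregate is $n\alpha$-strongly convex with Hessian bounded above by $n\beta I_d$ on $D^o$; hence Proposition~\ref{prop:argmin-lip} with parameters $n\alpha,n\beta$ gives $|x_j - x_{j-1}| \le \kappa_n(\|\tilde f_j - f_j\|)$, where $\kappa_n \equiv \kappa_{n\alpha,n\beta} \in \Kc_\infty$ is concave. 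To control $\|\tilde f_j - f_j\|$ I would use that $f_j \in \Sc$ is a fixed point of $\proj_\Sc$, so nonexpansiveness of the projection gives $\|\tilde f_j - f_j\| = \|\proj_\Sc(\hat f_j^s) - \proj_\Sc(f_j)\| \le \|\hat f_j^s - f_j\| \le \varepsilon_j + \|\Phi(\boldsymbol{\eta}_j)\|$ by~\eqref{eq:perturb-II} and the triangle inequality, with $\|\Phi(\boldsymbol{\eta}_j)\| = \|\boldsymbol{\eta}_j\|$ by Parseval. Concavity of $\kappa_n$ (hence subadditivity, since $\kappa_n(0)=0$) then splits $\kappa_n(\|\tilde f_j - f_j\|) \le \kappa_n(\varepsilon_j) + \kappa_n(\|\boldsymbol{\eta}_j\|)$; taking expectations and applying Jensen's inequality for the concave $\kappa_n$ and then for the square root yields $\E[\kappa_n(\|\boldsymbol{\eta}_j\|)] \le \kappa_n(\E\|\boldsymbol{\eta}_j\|) \le \kappa_n(\sqrt{\E\|\boldsymbol{\eta}_j\|^2})$. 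Finally $\E[\eta_{j,k}^2] = 2 b_k^2$ with $b_k = \gamma_j/k^{p_j}$ gives $\E\|\boldsymbol{\eta}_j\|^2 = \sum_k 2 b_k^2 = 2\gamma_j^2 \zeta(2 p_j)$, so $\E\|\boldsymbol{\eta}_j\|$ scales as $\gamma_j\sqrt{\zeta(2 p_j)}$; summing over $j$ produces the claimed estimate.

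The main obstacle is the accuracy step, not the privacy step. The two delicate points are arranging the hybrid so that every intermediate aggregate remains in the strongly convex class on which Proposition~\ref{prop:argmin-lip} applies with the correct constants $n\alpha,n\beta$, and separating the noise and smoothening contributions additively — which is possible only because $\kappa_n$ is concave, a property that simultaneously licenses the subadditive split and the two Jensen steps that convert the random functional error into the deterministic moment $\gamma_j\sqrt{\zeta(2 p_j)}$. The nonexpansiveness observation (that $f_j$ is a fixed point of $\proj_\Sc$) is the ingredient that prevents the projection step from inflating the bound, and I would verify carefully that the limiting/measurability issues in the infinite-dimensional noise are covered by Lemma~\ref{lem:l_2-condition}.
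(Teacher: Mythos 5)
Your proposal is correct, and for the privacy claim and the second half of the accuracy argument it coincides with the paper's proof: differential privacy follows from Theorem~\ref{thm:diff-privacy} plus Proposition~\ref{prop:post-proc}, and the per-agent error $\|\tilde f_i - f_i\|$ is controlled exactly as you do, via non-expansiveness of $\proj_\Sc$ (using that $f_i$ is a fixed point), the triangle inequality through $\hat f_i^s$ and $\hat f_i$, subadditivity of the concave $\kappa_n$, and two applications of Jensen's inequality. The one place you genuinely diverge is the first step of the accuracy bound. The paper applies Proposition~\ref{prop:argmin-lip} \emph{once}, to the pair of aggregates $\sum_i \tilde f_i$ and $\sum_i f_i$ (both in the class with constants $n\alpha, n\beta$), getting $\E|\tilde x^* - x^*| \le \E[\kappa_n(\|\sum_i(\tilde f_i - f_i)\|)]$, and then pushes the sum out using the triangle inequality inside the norm followed by subadditivity of $\kappa_n$. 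You instead build the hybrid chain $x_j = \argmin(\sum_{i\le j}\tilde f_i + \sum_{i>j}f_i)$ and apply Proposition~\ref{prop:argmin-lip} $n$ times, once per consecutive pair. Both routes land on the identical intermediate bound $\sum_i \E[\kappa_n(\|\tilde f_i - f_i\|)]$; your hybrid argument costs $n$ invocations of the lemma but makes the per-agent attribution of error transparent and would survive even if $\kappa_n$ were not subadditive at that stage (though you still need subadditivity to separate $\varepsilon_i$ from $\|\boldsymbol{\eta}_i\|$, so nothing is saved in the end). Each hybrid aggregate is indeed a sum of $n$ members of $\Sc$, so the constants $n\alpha$, $n\beta$ (and $n\overline u$) are the right ones, as you note.

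One small numerical point: you compute $\E[\eta_{j,k}^2] = 2b_k^2$, which is the correct second moment for a Laplace variable with \emph{scale} $b_k$ as defined in the preliminaries, and this yields $\kappa_n(\sqrt{2}\,\gamma_j\sqrt{\zeta(2p_j)})$ rather than the stated $\kappa_n(\gamma_j\sqrt{\zeta(2p_j)})$; your ``scales as'' glosses over a factor of $\sqrt{2}$. The paper's own proof writes $\E[\|\boldsymbol{\eta}_i\|^2] = \sum_k b_{i,k}^2$, i.e., it implicitly treats $b_k$ as the standard deviation, so the discrepancy originates in the paper's conventions rather than in your argument; but as written your derivation does not quite reproduce the constant in the theorem statement, and you should either adopt the paper's convention explicitly or carry the $\sqrt{2}$ through.
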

\begin{proof}
  Since the distributed algorithm is a post-processing step on the
  perturbed functions, privacy preservation of the objective functions
  follows from Theorem~\ref{thm:diff-privacy} and
  Proposition~\ref{prop:post-proc}.  For convenience, let
  \begin{align*}
    \Delta = \E \left|\tilde{x}^* - x^* \right| = \E \Big|
    \argmin_{x \in X} \sum_{i = 1}^n \tilde f_i - \argmin_{x \in X}
    \sum_{i = 1}^n f_i \Big|.
  \end{align*}
  Since $\mu_{n\alpha,n\beta}$ is convex and belongs to
  class~$\Kc_\infty$ (so is monotonically increasing), $\kappa_n$ is
  concave and belongs to class $\Kc_\infty$ and so is
  subadditive. Therefore, using Proposition~\ref{prop:argmin-lip},
  \begin{align*}
    \Delta &\le \E \Big[ \kappa_n \Big( \Big\| \sum_{i = 1}^n \tilde
    f_i - \sum_{i = 1}^n f_i \Big\| \Big) \Big]
    \\
    &\le \E \Big[\kappa_n \Big( \sum_{i = 1}^n \| \tilde f_i - f_i \|
    \Big) \Big] \le \sum_{i = 1}^n \E \big[\kappa_n \big( \|\tilde f_i
    - f_i \|\big) \big].
  \end{align*}
 Then, by the non-expansiveness of projection, we have
  \begin{align}\label{eq:delta-bound-mid}
    \notag \Delta &\le \sum_{i = 1}^n \E \big[ \kappa_n \big(\| \hat
    f_i^s - f_i \|\big) \big]
    \\
    \notag &\le \sum_{i = 1}^n \E \big[ \kappa_n \big(\| \hat f_i^s -
    \hat f_i \|\big) + \kappa_n \big(\| \hat f_i - f_i \|\big) \big]
    \\
    &\le \sum_{i = 1}^n \big( \kappa_n (\varepsilon_i) + \E \big[
    \kappa_n \big(\| \boldsymbol{\eta}_i \| \big) \big]\big).
  \end{align}
  By invoking Jensen's inequality twice, for all $i \in \until{n}$,
  \begin{align}\label{eq:jensen}
    &\E \big[ \kappa_n (\| \boldsymbol{\eta}_i \| ) \big] \le \kappa_n
    \big(\E \big[ \| \boldsymbol{\eta}_i \| \big] \big) =
    \kappa_n \big(\E \big[\sqrt{\| \boldsymbol{\eta}_i
      \|^2} \big] \big) \\
    \notag &\!\!\le \kappa_n\Big(\sqrt{\E \big[\|\boldsymbol{\eta}_i\|^2
      \big]} \Big) \!=\! \kappa_n\Big(\sqrt{\sum\nolimits_{k = 1}^\infty
      b_{i, k}^2} \Big) \!=\! \kappa_n \Big(\gamma_i \sqrt{\zeta(2 p_i)} \Big).
  \end{align}
  The result follows from~\eqref{eq:delta-bound-mid}
  and~\eqref{eq:jensen}.
\end{proof}

The following result describes the trade-off between accuracy and
privacy. The proof follows by direct substitution.

\begin{corollary}\longthmtitle{Privacy-accuracy
    trade-off}\label{cor:pri-acc-rel}
  Under the hypotheses of Theorem~\ref{thm:accuracy}, if $p_i =
  \frac{q_i}{2}$ in~\eqref{eq:b} for all $i$, then
  \begin{align}\label{eq:pri-acc-rel}
     \E \left|\tilde{x}^* - x^* \right| \le \sum_{i = 1}^n \kappa_n
    \left(\frac{\zeta(q_i)}{\epsilon_i} \right) +
    \kappa_n(\varepsilon_i).
  \end{align}
\end{corollary}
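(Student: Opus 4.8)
The plan is to treat this purely as an algebraic specialization of Theorem~\ref{thm:accuracy}, exactly as the surrounding text advertises. First I would verify that the parameter choice $p_i = q_i/2$ in~\eqref{eq:b} is admissible: the interval $\left(\frac{1}{2}, q_i - \frac{1}{2}\right)$ required by Theorem~\ref{thm:accuracy} is nonempty precisely when $q_i > 1$ (the standing assumption), and its midpoint is exactly $\frac{1}{2}\big(\frac{1}{2} + q_i - \frac{1}{2}\big) = \frac{q_i}{2}$. Hence $p_i = q_i/2$ lies in the admissible range and the conclusion of Theorem~\ref{thm:accuracy} applies verbatim, giving the starting bound
\[
  \E\left|\tilde{x}^* - x^*\right| \le \sum_{i = 1}^n \kappa_n\big(\gamma_i \sqrt{\zeta(2 p_i)}\big) + \kappa_n(\varepsilon_i), \qquad \epsilon_i = \frac{\sqrt{\zeta(2(q_i - p_i))}}{\gamma_i}.
\]

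The key observation is that setting $p_i = q_i/2$ makes both zeta arguments collapse to the same value, $2(q_i - p_i) = q_i$ and $2 p_i = q_i$. I would then use the privacy identity to eliminate $\gamma_i$: since $\epsilon_i = \sqrt{\zeta(q_i)}/\gamma_i$, solving gives $\gamma_i = \sqrt{\zeta(q_i)}/\epsilon_i$, and substituting into the accuracy term yields
\[
  \gamma_i \sqrt{\zeta(2 p_i)} = \gamma_i \sqrt{\zeta(q_i)} = \frac{\sqrt{\zeta(q_i)}}{\epsilon_i}\,\sqrt{\zeta(q_i)} = \frac{\zeta(q_i)}{\epsilon_i}.
\]
Inserting this into the starting bound produces $\sum_{i=1}^n \kappa_n\big(\zeta(q_i)/\epsilon_i\big) + \kappa_n(\varepsilon_i)$, which is precisely the claimed inequality~\eqref{eq:pri-acc-rel}.

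There is essentially no analytical obstacle here: the argument is pure substitution, and the increasing function $\kappa_n$ is carried along untouched because only its argument is rewritten. The one point requiring care — and the only step I would state explicitly beyond the algebra — is confirming that the choice $p_i = q_i/2$ respects the hypotheses of Theorem~\ref{thm:accuracy}; everything else reduces to the one-line rewrite of the two zeta terms via $\gamma_i = \sqrt{\zeta(q_i)}/\epsilon_i$.
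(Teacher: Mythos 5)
Your proposal is correct and matches the paper's own argument, which is simply ``direct substitution'' of $p_i = q_i/2$ into Theorem~\ref{thm:accuracy} using $\gamma_i = \sqrt{\zeta(q_i)}/\epsilon_i$; your added check that $q_i/2$ lies in the admissible interval $\left(\tfrac{1}{2}, q_i - \tfrac{1}{2}\right)$ is a worthwhile explicit remark but does not change the route.
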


In Corollary~\ref{cor:pri-acc-rel}, $q_i$ and $\epsilon_i$ are chosen
independently, which in turn determines the value of $\gamma_i$
according to~\eqref{eq:eps}.  Also, it is clear
from~\eqref{eq:pri-acc-rel} that in order for the accuracy of the
coordination algorithm not to be affected by the smoothening step,
each agent $i \in \until{n}$ has to take the value of $\varepsilon_i$
sufficiently small so that it is negligible relative to $\zeta(2p_i) /
\epsilon_i$. In particular, this procedure can be executed for any
arbitrarily large value of $\epsilon_i$, so that in the case of no
privacy requirements at all, perfect accuracy is recovered, as
specified in Problem~\ref{problem}.

\begin{remark}\longthmtitle{Accuracy bound for
    sufficiently large domains} 
  {\rm One can obtain a less conservative bound
    than~\eqref{eq:pri-acc-rel} on the accuracy of the proposed class
    of algorithms if the minimizers of all the agents' objective
    functions are sufficiently far from the boundary of $X$. This can
    be made precise via Corollary~\ref{cor:large-domain}. If the
    aggregate objective function satisfies~\eqref{eq:uline-rD} and the
    amount of noise is also sufficiently small so that the minimizer
    of the sum of the perturbed objective functions satisfies this
    condition, then invoking Corollary~\ref{cor:large-domain}, we have
    \begin{align*}
       \E \left|\tilde{x}^* - x^* \right| &\le \frac{L}{n^2} \sum_{i =
        1}^n \left(\gamma_i^\frac{2}{d + 4} \zeta(2 p_i)^\frac{1}{d +
          4} + \varepsilon_i^\frac{2}{d + 4} \right)
      \\
      &= \frac{L}{n^2} \sum_{i = 1}^n
      \left[\left(\frac{\zeta(q_i)}{\epsilon_i}\right)^\frac{2}{d + 4}
        + \varepsilon_i^\frac{2}{d + 4} \right],
  \end{align*}
  where the equality holds under the assumption that $p_i =
  \frac{q_i}{2}$ in~\eqref{eq:b} for all $i \in \until{n}$. 
}
 \oprocend
\end{remark}

\section{Simulations}\label{sec:sims}

In this section, we report simulation results of our algorithm for
Example~\ref{example} with $D = X = [-5, 5]^2$, $n = 10$, $N_d = 100$,
and $\lambda = 0.01$.  The orthonormal basis of $L_2(D)$ is
constructed from the Gram-Schmidt orthogonalization of the Taylor
functions and the series is truncated to the second, sixth, and
fourteenth orders, resulting in $15$, $28$, and $120$-dimensional
coefficient spaces, respectively. This truncation also acts as the
smoothening step described in Section~\ref{subsec:en-smooth}, where
higher truncation orders result in smaller~$\varepsilon$.  We evaluate
the projection operator in~\eqref{eq:perturb-III} by numerically
solving the convex optimization problem $\min_{\tilde f_i \in \Sc}
\|\tilde f_i - \hat f_i^s\|$, where $\hat f_i^s$ is the result of the
truncation. The parameters of $\Sc$ are given by $\alpha = N_d
\lambda$, $\beta = N_d \lambda + N_d r_D \sqrt2 + e^{2r_D}$, and
$\overline u = \sqrt2 N_d (\lambda r_D + e^{2r_D})$ where $r_D =
5$. Rather than implementing any specific distributed coordination
algorithm, we use an iterative interior-point algorithm on $\tilde f$
and $f$ to find the perturbed $\tilde{x}^*$ and original $x^*$
optimizers, respectively (these points correspond to the asymptotic
behavior of any provably correct distributed optimization algorithm
with the perturbed and original functions, respectively).

\begin{figure}[htb]
  \centering
  \includegraphics[width = \linewidth]{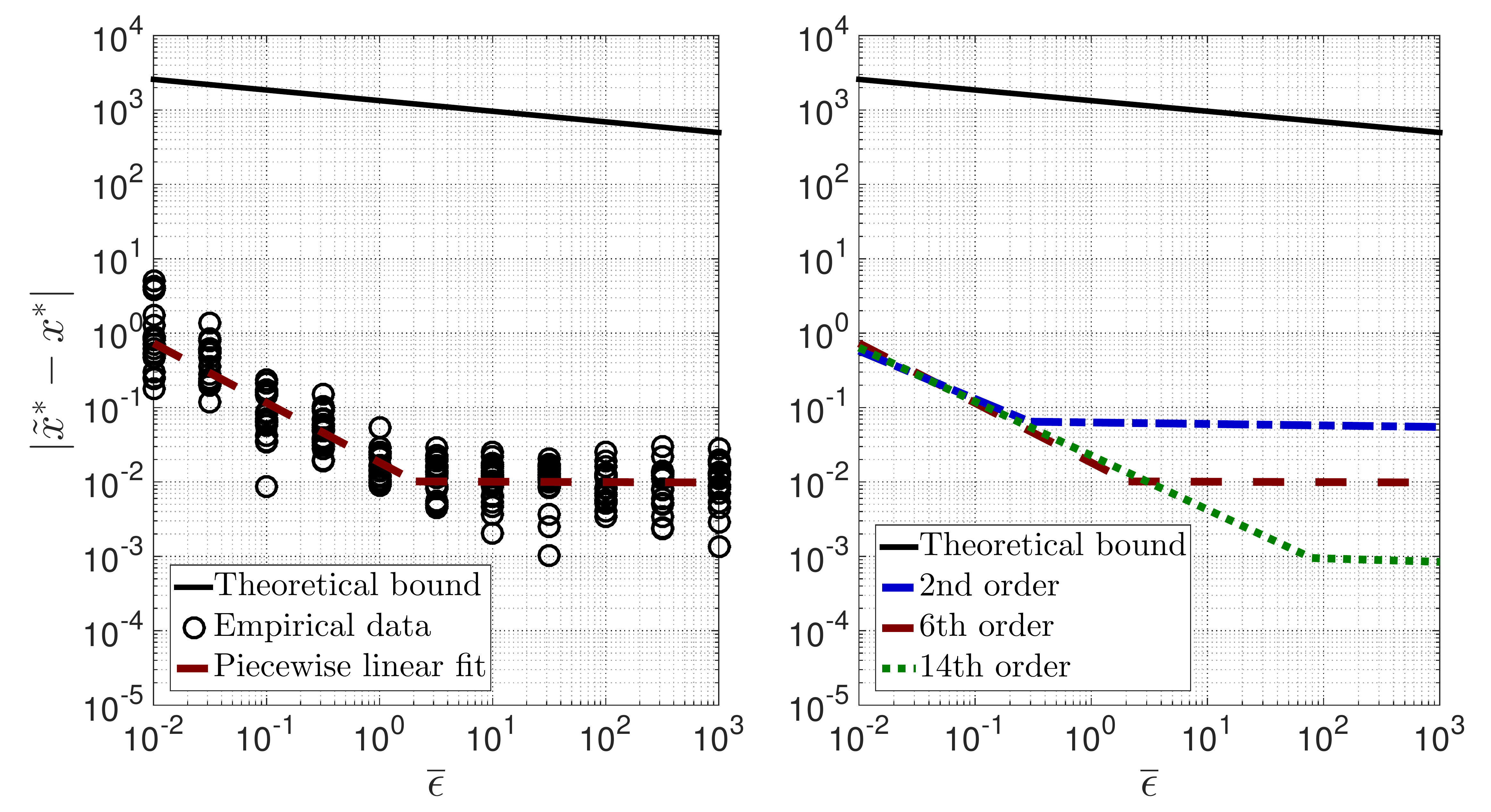}
  \caption{Privacy-accuracy trade-off curve of the
      proposed class of distributed, differentially private algorithms
      in Section~\ref{subsec:alg-des-anal} for Example~\ref{example}
      (with the same data as Figure~\ref{fig:huang}) and different
      truncation orders. Left: empirical data and its best piecewise
      linear fit for 6th-order truncation of the function expansions,
      together with the theoretical upper bound of
      Corollary~\ref{cor:pri-acc-rel}. Right: piecewise linear fit of
      empirical data for 2nd, 6th, and 14th order truncations as well
      as the theoretical upper bound. Accuracy improves with the
      truncation order.}\label{fig:sweep}
\end{figure}

The privacy levels are chosen the same for all agents, i.e., $\epsilon
= (\overline \epsilon, \overline \epsilon, \cdots, \overline
\epsilon)$, and $\overline \epsilon$ is swept logarithmically over
$[10^{-2}, 10^3]$. For each $i \in \until{n}$, we set $q_i = 2p_i =
1.1$ and $\gamma_i = \sqrt{\zeta(2(q_i - p_i))} / \overline
\epsilon$. For each value of $\overline \epsilon$ and truncation
order, the simulations are repeated 20 times to capture the
stochasticity of the solutions.  Figure~\ref{fig:sweep} illustrates
the error $|\tilde{x}^* - x^*|$ as a function of $\overline \epsilon$
for different truncation orders, together with the best linear fit of
$\log |\tilde{x}^* - x^*|$ against $\log \overline \epsilon$, and the
upper bound obtained in Corollary~\ref{cor:pri-acc-rel}. The
conservative nature of this upper bound can be explained by noting the
approximations leading to the computation of $\kappa_n$ in
Proposition~\ref{prop:argmin-lip}, suggesting that there is room for
refining this bound. Figure~\ref{fig:sweep} shows that accuracy keeps
improving as the privacy requirement is relaxed until the
$\varepsilon$-term (resulting from the smoothening/truncation)
dominates the error. This ``saturation'' value can be decreased by
increasing the truncation order (which comes at the expense of more
computational complexity), in contrast with the behavior of
message-perturbing algorithms, cf. Figure~\ref{fig:huang}.  It is
important to mention that the respective error values for a fixed
$\epsilon$ cannot be compared between Figures~\ref{fig:huang}
and~\ref{fig:sweep} because, in~\cite{ZH-SM-NV:15}, $\epsilon$ is
defined as the total exponent in~\eqref{eq:diff-privy}, i.e.,
$\epsilon_{i_0} \|f_{i_0} - f_{i_0}'\|_\Vc$. However, it can be seen
that the accuracy in Figure~\ref{fig:huang} is almost indifferent to
the value of $\epsilon$ and is in the same order as $r_D = 5$. This is
explained by the impossibility result of Proposition IV.2: since the
noise-free algorithm of~\cite{ZH-SM-NV:15} is not asymptotically
stable, depending on the specific application, its accuracy may not be
desirable regardless of the value of~$\epsilon$. In contrast, the
accuracy in Figure~\ref{fig:sweep} keeps improving as $\epsilon$ is
increased (with an appropriate choice of truncation order).

\section{Conclusions and Future Work}\label{sec:conclusion}

We have studied the distributed optimization of the sum of local
strongly convex objective functions by a group of agents subject to
the requirement of differential privacy.  We have established the
incompatibility between differential privacy and the asymptotic
stability of the underlying (noise-free) dynamics for coordination
strategies based on the perturbation of the messages among agents.
This has motivated us to develop a new framework for differentially
private release of functions from the $L_2$ space that can be applied
to arbitrary separable Hilbert spaces. We have also carefully
described how perturbed functions with the desired regularity
requirements can be provided to any distributed coordination algorithm
while preserving privacy. Finally, we upper bounded the accuracy of
the resulting class of distributed, differentially private
coordination algorithms.  Future work will analyze how to relax the
smoothness and convexity assumptions on the local objective functions
and the compactness hypothesis on their domains, characterize the
\emph{optimal} privacy-accuracy trade-off curve of distributed
coordination algorithms based on objective perturbation, characterize
the expected suboptimality gap of distributed coordination algorithms
for a given desired level of privacy, provide procedures for choosing
the truncation order of the functional expansion based on the
objective function properties (which must itself be differentially
private), compare the numerical efficiency of different orthonormal
bases for the $L_2$ space, and further understand the appropriate
scale of the privacy parameter for specific application domains.

\section*{Acknowledgments}
The authors would like to thank the anonymous reviewers for helpful
comments and suggestions that helped improve the presentation. This
research was partially supported by NSF Award CNS-1329619 and Award
FA9550-15-1-0108.

\appendices

\section{$\Kc$-Lipschitz Property of the $\argmin$
  Map}\label{sec:argmin-lip}

Here, we establish the Lipschitzness of the $\argmin$ map under
suitable assumptions.  This is a strong result of independent interest
given that $\argmin$ is not even continuous for arbitrary $C^2$
functions. Our accuracy analysis for the proposed class of
distributed, differentially private algorithms in
Section~\ref{subsec:alg-des-anal} relies on this result. We begin with
a lemma stating a geometric property of balls contained in
convex, compact domains.

\begin{lemma}\longthmtitle{Minimum portion of balls contained in
    convex compact domains}\label{lem:portion}
  Assume $D \subset \real^d$ is convex, compact, and has nonempty
  interior and let $r_D > 0$ denote its inradius. Then, there exists
  $\lambda_D \in (0, 1)$ such that,
  \begin{align*}
    m(B(x, r) \cap D) \ge \lambda_D m(B(x, r)) ,
  \end{align*}
  for any $x \in D$ and $r \le r_D$.
\end{lemma}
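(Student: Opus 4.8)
The plan is to bound the intersection volume from below by exhibiting, inside $B(x,r)\cap D$, a ball whose volume is a fixed fraction of $m(B(x,r))$. The key geometric tool is a homothety centered at the point $x$ that shrinks the inscribed ball of $D$ toward $x$: convexity guarantees the shrunken ball still lies in $D$, and a triangle inequality guarantees it lies inside $B(x,r)$. This sidesteps any direct estimate of how much of the ball falls inside $D$ near the boundary.

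Concretely, first I would use compactness of $D$ to fix a center $c$ with $B(c,r_D)\subseteq D$ (the supremum defining the inradius is attained). Fix $x\in D$ and set $\ell=|x-c|$. For any $\lambda\in[0,1]$, convexity of $D$ gives
\begin{align*}
  \lambda B(c,r_D)+(1-\lambda)x = B\big(x+\lambda(c-x),\,\lambda r_D\big)\subseteq D,
\end{align*}
since every point of this set is a convex combination of $x\in D$ and a point of $B(c,r_D)\subseteq D$. This ball is centered at distance $\lambda\ell$ from $x$ and has radius $\lambda r_D$, so by the triangle inequality it is contained in $B(x,r)$ whenever $\lambda(\ell+r_D)\le r$. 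I would therefore choose $\lambda=r/(\ell+r_D)$; since $r\le r_D$ this $\lambda$ lies in $[0,1]$, and the resulting ball satisfies
\begin{align*}
  B\big(x+\lambda(c-x),\,\lambda r_D\big)\subseteq B(x,r)\cap D.
\end{align*}

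Comparing volumes via $m(B(y,\rho))=\rho^d\,m(B(0,1))$ then yields
\begin{align*}
  m(B(x,r)\cap D)\ge (\lambda r_D)^d\,m(B(0,1)) = \Big(\frac{r_D}{\ell+r_D}\Big)^{\!d}\, m(B(x,r)).
\end{align*}
To make this bound uniform in $x$, I would bound $\ell=|x-c|\le \operatorname{diam}(D)=:R<\infty$ (compactness of $D$) and set
\begin{align*}
  \lambda_D=\Big(\frac{r_D}{R+r_D}\Big)^{\!d}.
\end{align*}
This $\lambda_D$ is clearly positive, and it is strictly less than $1$ because $R>0$ forces $R+r_D>r_D$; hence $\lambda_D\in(0,1)$, as required, and $m(B(x,r)\cap D)\ge \lambda_D\, m(B(x,r))$ for every $x\in D$ and $r\le r_D$.

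The computation itself is short; the main obstacle is conceptual, namely recognizing that the homothety argument replaces the more naive and messy route of estimating the solid angle of a cone with apex $x$ inscribed in $D$ (whose half-angle is controlled by $\arcsin(r_D/\ell)$) and integrating it against $B(x,r)$. The only points requiring care are verifying that the chosen $\lambda$ is feasible in $[0,1]$ uniformly in $x$ and $r$ (which reduces precisely to the hypothesis $r\le r_D$) and that the inscribed ball exists, which follows from compactness of $D$ together with the nonemptiness of its interior.
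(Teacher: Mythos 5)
Your proof is correct, but it takes a genuinely different route from the paper's. The paper argues via a cone: for $x$ outside the inball $B(c_D,r_D)$ it forms $C=\mathrm{conv}(\Sigma\cup\{x\})$ with $\Sigma$ the central cross-section of the inball perpendicular to $c_D-x$, notes $C\subseteq D$ by convexity, checks that every ray from $x$ within the cone's solid angle travels a distance at least $|x-c_D|\ge r_D\ge r$ before exiting $C$, and concludes that the fraction $\Omega_{\theta_x}/\Omega_d$ of $B(x,r)$ lies in $D$, with $\theta_x=\tan^{-1}(r_D/|x-c_D|)$; minimizing over $x$ gives $\lambda_D=\Omega_{\tan^{-1}(r_D/R_D)}/\Omega_d$ where $R_D$ is the radius of the smallest ball centered at $c_D$ containing $D$. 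You instead shrink the entire inball toward $x$ by the homothety $z\mapsto \lambda z+(1-\lambda)x$ with $\lambda=r/(|x-c|+r_D)$, which by convexity and the triangle inequality places a ball of radius $\lambda r_D$ inside $B(x,r)\cap D$, giving $\lambda_D=\big(r_D/(\mathrm{diam}(D)+r_D)\big)^d$. Both arguments are sound and both yield explicit constants, which is all the lemma asserts; yours is shorter and avoids the solid-angle integral~\eqref{eq:Omega_x} and the separate treatment of points inside versus outside the inball. The trade-off is a somewhat weaker constant: the solid-angle fraction scales roughly like $(r_D/R_D)^{d-1}$ while your homothety constant scales like $(r_D/\mathrm{diam}(D))^{d}$, and since $\lambda_D$ enters the explicit Lipschitz gain $\kappa_{\alpha,\beta}$ of Proposition~\ref{prop:argmin-lip} (and hence the accuracy bound of Theorem~\ref{thm:accuracy}), substituting your constant would make those downstream bounds slightly more conservative, though structurally unchanged.
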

\begin{proof}
  \renewcommand{\qedsymbol}{} Let $B(c_D, r_D)$ be the inball of $D$,
  i.e., the largest ball contained in $D$. If this ball is not unique,
  we pick one arbitrarily. Since $D^o \neq \emptyset$, $r_D > 0$. Let
  $R_D$ be the radius of the largest ball centered at $c_D$ that
  contains $D$. Since $D$ is compact, $R_D < \infty$. For any $x \in
  D$ that is on or outside of $B(c_D, r_D)$, let $\Sigma$ be the
  intersection of $B(c_D, r_D)$ and the hyperplane passing through
  $c_D$ and perpendicular to $c_D - x$. Consider the cone $C =
  \text{conv}(\Sigma \cup \{x\})$ where $\text{conv}$ denotes convex
  hull. Since $D$ is convex, $C \subseteq D$. Note that $C$ has half
  angle $\theta_x = \tan^{-1} \frac{r_D}{|x - c_D|}$ so the solid
  angle at its apex is
  \begin{align}\label{eq:Omega_x}
    \Omega_{\theta_x} = \frac{2 \pi^\frac{d - 1}{2}}{\Gamma(\frac{d -
        1}{2})} \int_0^{\theta_x} \sin^{d - 2}(\phi) d \phi.
  \end{align}
  Therefore, for any $r \le r_D$, the proportion
  $\frac{\Omega_{\theta_x}}{\Omega_d}$ of $B(x, r)$ is contained in
  $D$ where $\Omega_d = \frac{2 \pi^\frac{d}{2}}{\Gamma(\frac{d}{2})}$
  is the total $d$-dimensional solid angle.  For any $x$ inside
  $B(c_D, r_D)$, the same argument holds with
  \begin{align*}
    \theta_x = \max_{|x - c_D| \ge r_D} \tan^{-1} \frac{r_D}{|x -
      c_D|} = \frac{\pi}{4}.
  \end{align*}
  Therefore, for arbitrary $x \in D$, the statement holds with
  \begin{align*}
    \lambda_D = \min_{x \in D} \frac{\Omega_{\theta_x}}{\Omega_d} =
    \frac{1}{\Omega_d} \Omega_{\tan^{-1}(r_D / R_D)}. \eqoprocend
  \end{align*}
\end{proof}

We are now ready to establish the $\Kc$-Lipschitzness of the $\argmin$ map.

\begin{proposition}\longthmtitle{$\Kc$-Lipschitzness of
    $\argmin$}\label{prop:argmin-lip}
  For any two functions $f, g \in \Sc$,
  \begin{align}\label{eq:argmin-lip}
    \big| \argmin_{x \in X} f - \argmin_{x \in X} g \big| \le
    \kappa_{\alpha,\beta}(\|f - g\|),
  \end{align}
  where $\kappa_{\alpha,\beta} \in \Kc_\infty$ is given by
  \begin{align*}
    \kappa_{\alpha,\beta}^{-1}(r) = \frac{\alpha^2 \pi^\frac{d}{2}}{d
      2^{d + 3} \Gamma(\frac{d}{2})} \lambda_D
    \left(\frac{r_D}{d_D}\right)^d r^4 \mu_{\alpha,\beta}^d(r), \quad
    \forall r \in [0, \infty),
  \end{align*}
  $r_D$ and $\lambda_D$ are as in Lemma~\ref{lem:portion}, $d_D$ is
  the diameter of $D$, and $\mu_{\alpha,\beta} \in \Kc_\infty$ is
  defined for all $r \in [0, \infty)$ by
  \begin{align*}
    \mu_{\alpha,\beta}(r) = \frac{\alpha r^2}{2 \sqrt{\alpha \beta r^2
        + 2(\beta + \alpha) \overline u r + 4 \overline u^2}}.
  \end{align*}
\end{proposition}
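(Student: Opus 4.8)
The plan is to prove the statement in the equivalent ``inverse'' form: lower-bound the $L_2$ distance $\|f-g\|$ in terms of $r := |x_f^* - x_g^*|$, where $x_f^* = \argmin_{x\in X} f$ and $x_g^* = \argmin_{x\in X} g$, and then invert the resulting strictly increasing map to recover $\kappa_{\alpha,\beta}$. First I would exploit $\alpha$-strong convexity together with the first-order optimality (variational) inequality at each minimizer: writing $h := f-g$, the inequalities $f(x_g^*) - f(x_f^*)\ge\frac{\alpha}{2} r^2$ and $g(x_f^*)-g(x_g^*)\ge\frac{\alpha}{2} r^2$ (each following from $\phi(x)\ge\phi(x^*)+\langle\nabla\phi(x^*),x-x^*\rangle+\frac{\alpha}{2}|x-x^*|^2$ and $\langle\nabla\phi(x^*),x-x^*\rangle\ge 0$) add to give $h(x_g^*)-h(x_f^*)\ge\alpha r^2$. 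Hence $|h(x_0)|\ge\frac{\alpha r^2}{2}$ at one of the two minimizers $x_0\in\{x_f^*,x_g^*\}$; replacing $f-g$ by $g-f$ if necessary, I may assume $h(x_0)\ge\frac{\alpha r^2}{2}$.

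Next I would show that $h$ stays above a fixed fraction of this value on an entire ball about $x_0$. A second-order Taylor expansion gives $h(x)\ge h(x_0)-|\nabla h(x_0)|\,|x-x_0|-\frac{1}{2}\|\nabla^2 h\|\,|x-x_0|^2$, where the gradient term is controlled by the uniform bound $\overline u$ on $|\nabla f|,|\nabla g|$ sharpened by the monotonicity estimate $\alpha r^2\le\langle\nabla f(x_g^*)-\nabla f(x_f^*),\,x_g^*-x_f^*\rangle\le\beta r^2$, and the Hessian term by $\alpha I_d\le\nabla^2 f,\nabla^2 g\le\beta I_d$. Requiring the right-hand side to remain above $\frac{\alpha r^2}{4}$ produces a quadratic inequality in the radius whose admissible root is exactly $\mu_{\alpha,\beta}(r)$; the discriminant that emerges factors neatly as $(\alpha r+2\overline u)(\beta r+2\overline u)$, which is precisely the quantity under the square root in the definition of $\mu_{\alpha,\beta}$. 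This gives $h(x)\ge\frac{\alpha r^2}{4}$ for all $x\in B(x_0,\mu_{\alpha,\beta}(r))$.

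With this ball in hand, I would bound $\|f-g\|$ from below by integrating over its intersection with $D$. To apply Lemma~\ref{lem:portion} I need a radius at most $r_D$; since $(\alpha r+2\overline u)(\beta r+2\overline u)\ge\alpha\beta r^2\ge\alpha^2 r^2$ forces $\mu_{\alpha,\beta}(r)\le r/2\le d_D/2$, the choice $\rho=\frac{r_D}{2d_D}\mu_{\alpha,\beta}(r)$ satisfies both $\rho\le r_D$ and $\rho\le\mu_{\alpha,\beta}(r)$, so $|h|\ge\frac{\alpha r^2}{4}$ on $B(x_0,\rho)$ and $m(B(x_0,\rho)\cap D)\ge\lambda_D\,\frac{2\pi^{d/2}}{d\,\Gamma(d/2)}\rho^d$. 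Consequently $\|f-g\|^2\ge\big(\frac{\alpha r^2}{4}\big)^2\lambda_D\frac{2\pi^{d/2}}{d\,\Gamma(d/2)}\rho^d$, and substituting $\rho$ collapses this to exactly $\kappa_{\alpha,\beta}^{-1}(r)$ (the factor $(r_D/d_D)^d$ comes from $\rho^d$, and the constant $2^{d+3}$ from combining $(\tfrac14)^2$, the unit-ball volume constant $\tfrac{2\pi^{d/2}}{d\Gamma(d/2)}$, and the $2^{-d}$ in $\rho^d$). I would then verify $\kappa_{\alpha,\beta}^{-1}\in\Kc_\infty$---continuity, $\kappa_{\alpha,\beta}^{-1}(0)=0$, strict monotonicity (inherited from $\mu_{\alpha,\beta}\in\Kc_\infty$), and divergence as $r\to\infty$---so that inverting the strictly increasing map $\kappa_{\alpha,\beta}^{-1}$ yields~\eqref{eq:argmin-lip}.

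The main obstacle is the second step: extracting the precise radius $\mu_{\alpha,\beta}(r)$ demands a careful estimate of $|\nabla h(x_0)|$ at the minimizer (combining the uniform gradient bound $\overline u$ with the gradient-monotonicity consequences of strong convexity) and of $\|\nabla^2 h\|$, followed by solving the governing quadratic so that its discriminant reduces cleanly to the product $(\alpha r+2\overline u)(\beta r+2\overline u)$; threading these constants so that they match the geometric factor $\lambda_D$ of Lemma~\ref{lem:portion} is the delicate bookkeeping. By contrast, the strong-convexity inequality of the first step and the integration of the third step are routine once Lemma~\ref{lem:portion} is in place. The payoff---and the reason this is a genuinely strong result---is that strong convexity is exactly what upgrades the pointwise value $h(x_0)\gtrsim r^2$ into a quantitative lower bound on $\|f-g\|$, which is what rescues the otherwise discontinuous $\argmin$ map and makes it $\Kc$-Lipschitz.
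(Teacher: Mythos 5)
Your overall architecture (pointwise separation at a minimizer, propagation to a ball, integration against Lemma~\ref{lem:portion}) is sound, and your first and third steps are correct: the two strong-convexity/optimality inequalities do give $h(x_0)\ge\frac{\alpha r^2}{2}$ at one of the minimizers, and the final integration with radius $\frac{r_D}{2d_D}\mu_{\alpha,\beta}(r)$ reproduces the stated constant exactly. The genuine gap is in the second step. With the only bounds available to you at a (possibly boundary) minimizer, namely $|\nabla h(x_0)|\le 2\overline u$ and $\|\nabla^2 h\|\le\beta-\alpha$, the condition $h(x_0)-|\nabla h(x_0)|\rho-\frac{1}{2}(\beta-\alpha)\rho^2\ge\frac{\alpha r^2}{4}$ in the worst case $h(x_0)=\frac{\alpha r^2}{2}$ yields the admissible radius
\begin{align*}
\rho^*=\frac{\alpha r^2/2}{2\overline u+\sqrt{4\overline u^2+\tfrac{1}{2}(\beta-\alpha)\alpha r^2}},
\end{align*}
whose discriminant is $4\overline u^2+\frac{1}{2}(\beta-\alpha)\alpha r^2$ and does \emph{not} factor as $(\alpha r+2\overline u)(\beta r+2\overline u)$. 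For small $r$ one has $\rho^*\approx\frac{\alpha r^2}{8\overline u}\approx\frac{1}{2}\mu_{\alpha,\beta}(r)$, so your ball is strictly smaller than the one the statement requires; the argument still proves \emph{a} $\Kc$-Lipschitz bound, but with $\mu$ replaced by roughly $\mu/2$, i.e.\ a $\kappa^{-1}$ smaller by a factor $2^{-d}$, not the $\kappa_{\alpha,\beta}$ of the proposition. The ``sharpening'' you invoke via $\alpha r^2\le\langle\nabla f(x_g^*)-\nabla f(x_f^*),x_g^*-x_f^*\rangle\le\beta r^2$ does not close this, since it constrains $\nabla f$ at two points rather than $\nabla(f-g)$ at $x_0$, and $|\nabla h(x_0)|\le\overline u$ is not available unless the minimizer is interior.

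For reference, the product $(\alpha r+2\overline u)(\beta r+2\overline u)$ in $\mu_{\alpha,\beta}$ arises differently in the paper's proof: one compares the quadratic minorant $f_l$ of $f$ at $a=\argmin_X f$ (curvature $\alpha$) with the quadratic majorant $g_u$ of $g$ at $b=\argmin_X g$ (curvature $\beta$), so that $|f-g|\ge[f_l-g_u-m]^+$ with $m=f(a)-g(b)\ge0$. The set $\{f_l-g_u-m\ge0\}$ is an explicit ball $B(c,r)$ containing $b$, with $r^2-|c-b|^2\ge\frac{\alpha}{\beta-\alpha}|a-b|^2$ and $r\le\frac{1}{\beta-\alpha}\sqrt{(\alpha|a-b|+2\overline u)(\beta|a-b|+2\overline u)}$ (this is where the product appears, from bounding $r^2$ via $|u_a|,|u_b|\le\overline u$); hence $r-|c-b|\ge\mu_{\alpha,\beta}(|a-b|)$, and on $B(b,\frac{1}{2}(r-|c-b|))$ one gets $f_l-g_u-m\ge\frac{\alpha}{4}|a-b|^2$. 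If you want to keep your Taylor-expansion route, you must either accept the weaker constant or redo the propagation step using this two-sided quadratic comparison rather than a generic bound on $\nabla h$ and $\nabla^2 h$.
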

\begin{proof}
  We consider the case where $a = \argmin_{x \in X} f(x) \neq
  \argmin_{x \in X} g(x) = b$ since the statement is trivial
  otherwise. Let $m_a = f(a)$, $m_b = g(b)$, $m = m_a - m_b$, $u_a =
  \nabla f(a)$, and $u_b = \nabla g(b)$. Without loss of generality,
  assume $m \ge 0$. Define,
  \begin{align*}
    f_l(x) &= \frac{\alpha}{2}|x - a|^2 + u_a^T(x - a) + m_a,
    \\
    g_u(x) &= \frac{\beta}{2}|x - b|^2 + u_b^T(x - b) + m_b,
  \end{align*}
  for all $x \in D$.  Since $f, g \in \Sc$, we can integrate $\nabla^2
  f \ge \alpha I_d$ and $\nabla^2 g \le \beta I_d$
  twice to get,
  \begin{align}\label{eq:fl-gu-ineq}
    \forall x \in D \quad f_l(x) \le f(x) \ \text{and} \ g(x) \le
    g_u(x) .
  \end{align}
  It follows that, for all $x \in D$,
  \begin{align*}
     |f(x) - g(x)| \ge [f_l(x) - g_u(x)]^+ \ge [f_l(x) - g_u(x) - m]^+,
   \end{align*}
  where $[z]^+ = \max\{z, 0\}$ for any $z \in \real$. After some
  computations, we get
  \begin{align*}
    f_l(x) - g_u(x) - m = - \frac{\beta - \alpha}{2} \Big( \left | x -
      c \right |^2 - r^2 \Big) ,
  \end{align*}
  where
  \begin{align*}
    c &= \frac{\beta b - \alpha a + u_a - u_b}{\beta - \alpha},
    \\
    r^2 &= \frac{\alpha \beta |a - b|^2}{(\beta - \alpha)^2}+ \frac{
      |u_a - u_b|^2}{(\beta - \alpha)^2} + \frac{2 (\beta u_a - \alpha
      u_b)^T(b - a)}{(\beta - \alpha)^2}.
  \end{align*}
  Therefore, the region where $f_l - g_u - m \ge 0$ is $B(c, r)$.
  Next, we seek to identify a subset inside this ball where we can
  determine a strictly positive lower bound of $f_l - g_u$ that
  depends on the difference $|a-b|$. To this effect, note that $b \in
  B(c, r)$, since
  \begin{align*}
    r^2 - |c - b|^2 = \frac{\alpha}{\beta - \alpha} |a - b|^2 +
    \frac{2}{\beta - \alpha} u_a^T (b - a),
  \end{align*}
  and, by the convexity of the problem, $u_a^T (b - a) \ge 0$.  Let
  $\underline r = r - |c - b| > 0$ be the radius of the largest ball
  centered at $b$ and contained in $B(c, r)$. We have,
  \begin{align*}
    &r^2 - |c - b|^2 = (r - |c - b|)(r + |c - b|) \ge
    \frac{\alpha}{\beta - \alpha} |a - b|^2
    \\
    &\Rightarrow \underline r \ge \frac{\frac{\alpha}{\beta - \alpha}
      |a - b|^2}{r + |c - b|} \ge \frac{\frac{\alpha}{\beta - \alpha}
      |a - b|^2}{2r} \ge \mu_{\alpha,\beta}(|a - b|),
  \end{align*}
  where in the last inequality, we have used $|u_a|, |u_b| \le
  \overline{u}$.  Next, note that for all $x \in B(c, \frac{r + |c -
    b|}{2})$,
  \begin{multline}\label{eq:fl-gu-m-lb}
    f_l(x) - g_u(x) - m \ge
    \\
    - \frac{\beta - \alpha}{2} \Big( \frac{r^2 + |c - b|^2 + 2 r |c -
      b|}{4} - r^2 \Big) .
  \end{multline}
  Using the bound $2 r |c - b| \le r^2 + |c - b|^2$, we get after some
  simplifications,
  \begin{align*}
    (f_l - g_u)(x) - m \ge \frac{\alpha}{4} |a - b|^2 +
    \frac{1}{2} u_a^T(b - a) \ge \frac{\alpha}{4} |a - b|^2 ,
  \end{align*}
  for all $x \in B(b, \frac{\underline r}{2}) \subset B(c, \frac{r +
    |c - b|}{2})$.  Therefore,
  \begin{align*}
    \|f - g\|^2 &= \int_D |f(x) - g(x)|^2 d x
    \\
    &\ge \int_D ([f_l(x) - g_u(x) - m]^+)^2 d x
    \\
    & \ge \int_{B(b, \frac{\underline r}{2}) \cap D} (f_l(x) - g_u(x)
    - m)^2 d x
    \\
    & \ge \frac{\alpha^2}{16} |a - b|^4 m\left(B(b, \tfrac{\underline
        r}{2}) \cap D\right)
    \\
    & \ge \frac{\alpha^2}{16} |a - b|^4 m\left(B(b,
      \tfrac{\mu_{\alpha,\beta}(|a - b|)}{2}) \cap D\right).
  \end{align*}
  Now, we invoke Lemma~\ref{lem:portion} to lower bound the last term.
  Note that $\mu_{\alpha,\beta}(|a - b|) \le |a - b| \le d_D$ for all
  $a, b \in D$. Therefore, $\frac{r_D}{d_D}
  \frac{\mu_{\alpha,\beta}(|a - b|)}{2} \le \min\{r_D,
  \mu_{\alpha,\beta}(|a - b|) / 2\}$, so by Lemma~\ref{lem:portion},
  \begin{align*}
    \|f - g\|^2 &\ge \frac{\alpha^2}{16} |a - b|^4 m\left(B\left(b,
        \frac{r_D \mu_{\alpha,\beta}(|a - b|}{2 d_D}\right) \cap
      D\right)
    \\
    &\ge \frac{\alpha^2}{16} |a - b|^4 \lambda_D \frac{2
      \pi^\frac{d}{2}}{d \Gamma(\frac{d}{2})} \frac{r_D^d}{2^d d_D^d}
    (\mu_{\alpha,\beta}(|a - b|))^d,
  \end{align*}
  which yields~\eqref{eq:argmin-lip}.
\end{proof}

The next result shows that if the minimizers of $f$ and $g$ are
sufficiently far from the boundary of $D$, then their gradients need
not be uniformly bounded and yet one can obtain a less conservative
characterization of the $\Kc$-Lipschitz property of the $\argmin$ map.

\begin{corollary}\longthmtitle{$\Kc$-Lipschitzness of
    $\argmin$ for sufficiently large domains}\label{cor:large-domain}
  If $f$ and $g$ belong to $ \Sc_1 = \setdef{h \in \Sc_0}{\alpha I_d
    \le \nabla^2 h(x) \le \beta I_d, \; \forall x \in D^o}$ and
  \begin{align}\label{eq:uline-rD}
    \argmin_{x \in X} f(x), \; \argmin_{x \in X} g(x) \in B(c_D,
    \underline r_D) \cap X^o,
  \end{align}
  where $\underline r_D = \frac{\beta - \alpha}{\alpha + \beta + 2
    \sqrt{\alpha \beta}} r_D$ and $B(c_D, r_D) \subset D$, then
  \begin{align*}
    \big| \argmin_{x \in X} f - \argmin_{x \in X} g \big| \le L \|f
    - g\|^{\frac{2}{d + 4}},
  \end{align*}
  where
  \begin{align*}
    L = \frac{d(d + 2)(d + 4) (\beta - \alpha)^{d + 2} \Gamma(d/2)}{4
      (\alpha \beta)^{d/2 + 2} \pi^{d/2}}.
  \end{align*}
\end{corollary}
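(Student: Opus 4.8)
The plan is to re-run the quadratic sandwiching argument of Proposition~\ref{prop:argmin-lip}, but now exploiting that both minimizers lie in the interior~$X^o$. Write $a = \argmin_{x \in X} f$ and $b = \argmin_{x \in X} g$. Interiority forces the first-order conditions $\nabla f(a) = 0$ and $\nabla g(b) = 0$, and this is the key simplification: in Proposition~\ref{prop:argmin-lip} the gradient terms $u_a, u_b$ are precisely what forced the uniform bound $\overline u$, so with vanishing gradients we may work in the larger class $\Sc_1$ without any gradient assumption. With $u_a = u_b = 0$, integrating the Hessian bounds twice gives the quadratic envelopes $f_l(x) = \frac{\alpha}{2}|x-a|^2 + m_a \le f(x)$ and $g(x) \le g_u(x) = \frac{\beta}{2}|x-b|^2 + m_b$ on $D$, so $|f-g| \ge [f_l - g_u - m]^+$ with $m = m_a - m_b \ge 0$ (WLOG). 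A direct completion of the square gives $f_l(x) - g_u(x) - m = \frac{\beta-\alpha}{2}\big(r^2 - |x-c|^2\big)$ with $c = \frac{\beta b - \alpha a}{\beta-\alpha}$ and $r^2 = \frac{\alpha\beta}{(\beta-\alpha)^2}|a-b|^2$, so the positivity region is exactly the ball $B(c,r)$.

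First I would prove that this entire ball lies inside $D$. From $|a-c_D|, |b-c_D| \le \underline{r}_D$ one gets $|c-c_D| \le \frac{\alpha+\beta}{\beta-\alpha}\underline{r}_D$ and $r \le \frac{2\sqrt{\alpha\beta}}{\beta-\alpha}\underline{r}_D$, whence $|c-c_D| + r \le \frac{(\sqrt\alpha+\sqrt\beta)^2}{\beta-\alpha}\underline{r}_D = r_D$, the last equality being exactly the definition of $\underline{r}_D = \frac{\beta-\alpha}{\alpha+\beta+2\sqrt{\alpha\beta}}r_D$. Thus $B(c,r) \subseteq B(c_D,r_D) \subseteq D$. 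This containment is the crux of the argument and the reason Lemma~\ref{lem:portion} is no longer needed: because the positivity region lies wholly inside $D$, the factor $\lambda_D$ collapses to $1$ and, more importantly, I can integrate the genuine paraboloid rather than bounding it below by a constant on a smaller ball as in Proposition~\ref{prop:argmin-lip}. Concretely $\|f-g\|^2 \ge \big(\tfrac{\beta-\alpha}{2}\big)^2 \int_{B(c,r)}(r^2 - |x-c|^2)^2\,dx$, and the radial integral evaluates in closed form to $\frac{16\pi^{d/2}}{d(d+2)(d+4)\Gamma(d/2)}\,r^{d+4}$, which is exactly where the factors $(d+2)(d+4)$ in $L$ come from. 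Substituting $r^{d+4} = (\alpha\beta)^{(d+4)/2}(\beta-\alpha)^{-(d+4)}|a-b|^{d+4}$ and rearranging gives $|a-b|^{d+4} \le L\,\|f-g\|^2$ with $L$ as stated, equivalently the claimed $\Kc$-Lipschitz estimate.

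The main obstacle is the containment step, since it is what pins down the precise threshold $\underline{r}_D$ and what licenses the sharp closed-form integration of the paraboloid over the full ball (hence the clean constant $L$) instead of the conservative sub-ball estimate of Proposition~\ref{prop:argmin-lip}. A secondary point to check carefully is the first-order condition at the minimizers: this requires reading $X^o$ as a genuine $d$-dimensional neighborhood so that $\nabla f(a) = \nabla g(b) = 0$ truly holds; granting this, the envelope inequalities, the identification of $B(c,r)$, and the final substitution are all routine.
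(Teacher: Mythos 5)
Your proof is correct and follows essentially the same route as the paper's: set $u_a = u_b = 0$ by interiority, show via the triangle inequality that $B(c,r) \subseteq B(c_D, r_D) \subset D$ (which is exactly what the threshold $\underline{r}_D$ is calibrated for), and integrate the squared paraboloid $\left(\tfrac{\beta-\alpha}{2}\right)^2(r^2-|x-c|^2)^2$ over the full ball in closed form. The only nit is your final ``equivalently'': $|a-b|^{d+4} \le L\|f-g\|^2$ yields $|a-b| \le L^{1/(d+4)}\|f-g\|^{2/(d+4)}$, so strictly the constant should carry a $1/(d+4)$ power---but your $L$ coincides exactly with the paper's, so this discrepancy is inherited from the corollary statement rather than introduced by your argument.
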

\begin{proof}
  The proof follows in the same lines as the proof of
  Proposition~\ref{prop:argmin-lip} (and we use here the same
  notation).  Since the minimizers of $f$ and $g$ lie in the interior
  of $X$, $u_a = u_b = 0$. The main difference here is that due
  to~\eqref{eq:uline-rD}, we have for all $x \in B(c, r)$ that
  \begin{align*}
    |x - c_D| &\le |x - c| + |c - b| + |b - c_D|
    \\
    &\le \frac{\alpha + \sqrt{\alpha \beta}}{\beta - \alpha} 2
    \underline r_D + \underline r_D = r_D,
  \end{align*}
  so $B(c, r) \subset D$. Therefore, one can integrate $(f_l - g_u -
  m)^2$ on the whole $B(c, r)$ instead of its lower
  bound~\eqref{eq:fl-gu-m-lb} on the smaller ball $B(c, \frac{r + |c -
    b|}{2})$.  To explicitly calculate the value of the resulting
  integral, one can use the change of variables $x_i = c_i + r
  y_i^{1/2}, i \in \until{d}$ and then use the formula
  \begin{align*}
    \forall a_i > -1, \; \int_{S_d} y_1^{a_1} \cdots y_d^{a_d} dy =
    \frac{\Gamma(a_1 + 1) \cdots \Gamma(a_d + 1)}{\Gamma(a_1 + \dots +
      a_d + d + 1)},
  \end{align*}
  where $S_d = \setdef{y \in \real^d}{\sum_{i=1}^d y_i = 1 \text{ and
    } y_i \ge 0, \; i \in \until{d}} $. The result then follows from
  straightforward simplifications of the integral.
\end{proof}

\begin{IEEEbiography}[{\includegraphics[width=1in, height=1.25in, clip, keepaspectratio]{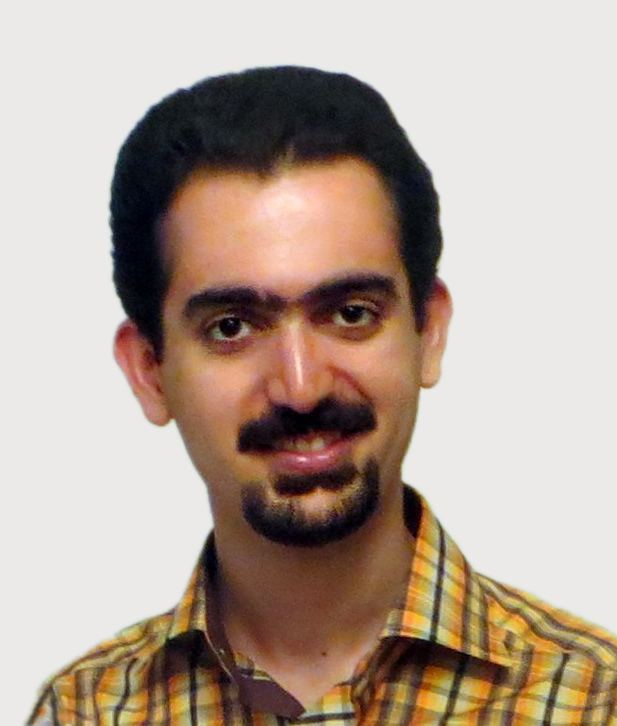}}]{Erfan Nozari}
  received his B.Sc. degree in Electrical Engineering-Controls from
  Isfahan University of Technology, Isfahan, Iran in 2013 and M.Sc. in
  Mechanical Engineering from University of California, San Diego, CA,
  USA in 2015. He is currently pursuing his Ph.D. degree in Mechanical
  Engineering from University of California, San Diego, CA, USA. He
  has been the recipient of the Campus-wide Best Undergraduate Student
  Award in 2013 from Isfahan University of Technology and the
  Mechanical and Aerospace Engineering Recruitment Fellowship in 2014
  from the University of California, San Diego. His research interests
  include complex networks, brain networks, networked and distributed
  control systems, and differential privacy.
\end{IEEEbiography}

\begin{IEEEbiography}[{\includegraphics[width=1in,
    height=1.25in,clip,keepaspectratio]
    {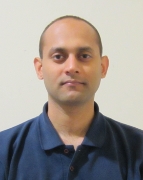}}]{Pavankumar Tallapragada}
  received the B.E. degree in Instrumentation Engineering from SGGS
  Institute of Engineering $\&$ Technology, Nanded, India in 2005,
  M.Sc. (Engg.) degree in Instrumentation from the Indian Institute of
  Science, Bangalore, India in 2007 and the Ph.D. degree in Mechanical
  Engineering from the University of Maryland, College Park in
  2013. He is currently a Postdoctoral Scholar in the Department of
  Mechanical and Aerospace Engineering at the University of
  California, San Diego. His research interests include
  event-triggered control, networked control systems, distributed
  control and networked transportation systems.
\end{IEEEbiography}

\begin{IEEEbiography}
  [{\includegraphics[width=1in,height=1.25in,clip,keepaspectratio]{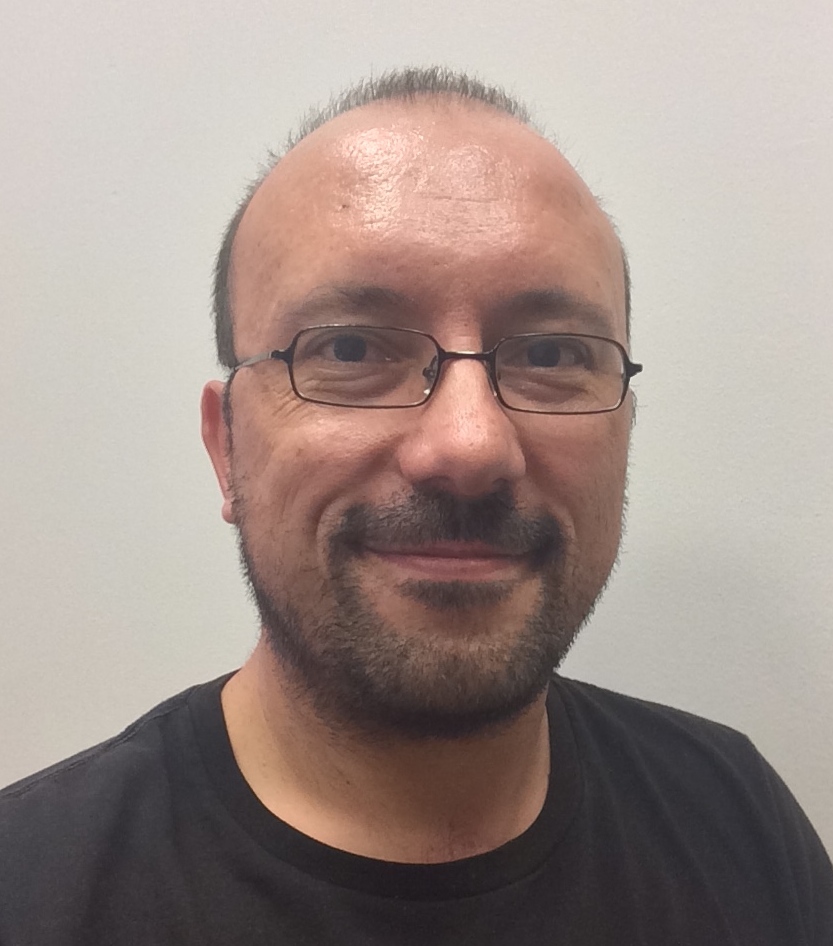}}]
  {Jorge Cort\'es} received the Licenciatura degree in mathematics
  from Universidad de Zaragoza, Zaragoza, Spain, in 1997, and the
  Ph.D. degree in engineering mathematics from Universidad Carlos III
  de Madrid, Madrid, Spain, in 2001. He held postdoctoral positions
  with the University of Twente, Twente, The Netherlands, and the
  University of Illinois at Urbana-Champaign, Urbana, IL, USA. He was
  an Assistant Professor with the Department of Applied Mathematics
  and Statistics, University of California, Santa Cruz, CA, USA, from
  2004 to 2007. He is currently a Professor in the Department of
  Mechanical and Aerospace Engineering, University of California, San
  Diego, CA, USA. He is the author of Geometric, Control and Numerical
  Aspects of Nonholonomic Systems (Springer-Verlag, 2002) and
  co-author (together with F. Bullo and S. Mart{\'\i}nez) of
  Distributed Control of Robotic Networks (Princeton University Press,
  2009). He is an IEEE Fellow and an IEEE Control Systems Society
  Distinguished Lecturer. His current research interests include
  distributed control, networked games, opportunistic state-triggered
  control and coordination, power networks, distributed optimization,
  spatial estimation, and geometric mechanics.
\end{IEEEbiography}

\end{document}